\newtheorem{thm}{Theorem}[section]
\newtheorem{claim}[thm]{Claim}
\newtheorem{lem}[thm]{Lemma}
\newtheorem{define}[thm]{Definition}
\newtheorem{cor}[thm]{Corollary}
\newtheorem{conjecture}[thm]{Conjecture}
\newtheorem{THM}{Theorem}
\newtheorem{rem}[thm]{Remark}
\def\L{{\mathcal L}}
\def\F{{\mathbb{F}}}
\def\R{{\mathbb{R}}}
\def\C{{\mathbb{C}}}
\def\cL{{\cal L}}
\def\m{{\mathbf{m}}}
\def\AP{{\mathbf{AP}}}
\newcommand{\ip}[2]{\langle #1,#2 \rangle}
\def\_{\,\,\,\,\,}
\def\supp{\textsf{supp}}
\def\rank{\textsf{rank}}
\newcommand{\eps}{\epsilon}
\newcommand{\remove}[1]{}
\newcommand{\floor}[1]{\left\lfloor #1\right\rfloor}
\begin{document}

\title{On the number of rich lines in truly high dimensional sets}

\author{Zeev Dvir\thanks{Department of Computer Science and Department of Mathematics,
Princeton University.
Email: \texttt{zeev.dvir@gmail.com}. } \and
Sivakanth Gopi\thanks{Department of Computer Science, Princeton University.
Email: \texttt{sgopi@cs.princeton.edu}.}}

\date{}
\maketitle

\begin{abstract}
We prove a new upper bound on the number of $r$-rich lines (lines with at least $r$ points) in a `truly' $d$-dimensional configuration of points $v_1,\ldots,v_n \in \C^d$. More formally, we show that, if the number of $r$-rich lines is significantly larger than $n^2/r^d$ then there must exist a large subset of the points  contained in a hyperplane. We conjecture that the factor $r^d$ can be replaced with a tight $r^{d+1}$. If true,  this would generalize the classic Szemer\'edi-Trotter theorem  which gives a bound of $n^2/r^3$ on the number of $r$-rich lines in a planar configuration. This conjecture was shown to hold in $\R^3$ in the seminal work of Guth and Katz \cite{GK10} and was also recently proved  over $\R^4$ (under some additional restrictions) \cite{SS14}. For the special case of arithmetic progressions ($r$ collinear points that are evenly distanced) we give a bound that is tight up to low order terms, showing that a $d$-dimensional grid achieves the largest number of $r$-term progressions.

The main ingredient in the proof is a new method to find a low degree polynomial that vanishes on many of the rich lines. Unlike previous applications of the polynomial method, we do not find this polynomial by interpolation. The starting observation is that the degree $r-2$ Veronese embedding takes $r$-collinear points to $r$ linearly dependent images. Hence, each collinear $r$-tuple of points, gives us a dependent $r$-tuple of images. We  then use the design-matrix method of \cite{BDWY12} to convert these `local' linear dependencies into a global one, showing that all the images lie in a  hyperplane. This then translates into a low degree polynomial vanishing on the original set.
\end{abstract} 

\section{Introduction}

The Szemer\'edi-Trotter theorem gives a tight upper bound on the number of incidences between a collection of points and lines in the real plane. We write $A\lesssim B$ to denote $A\le C\cdot B$ for some absolute constant $C$ and $A \approx B$ if we have both $A \lesssim B$ and $B \lesssim A$.
\begin{thm}[\cite{ST83}]
\label{thm-Szemeredi-Trotter}
Given a set of points $V$ and a set of lines $\L$ in $\R^2$, let $I(V,\L)$ be the set of incidences between $V$ and $\L$. Then,
$$I(V,\L)\lesssim |V|^{2/3}|\L|^{2/3}+|V|+|\L|.$$
\end{thm}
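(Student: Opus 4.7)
The plan is to prove Theorem~\ref{thm-Szemeredi-Trotter} via Sz\'ekely's crossing-number argument, which trades the incidence bound for a planar drawing estimate. First I would build an auxiliary graph $G$ whose vertices are the points of $V$, drawn at their actual positions in $\R^2$. For each line $\ell \in \L$ with $k_\ell := |V \cap \ell| \ge 2$, order the points of $V \cap \ell$ along $\ell$ and insert the $k_\ell - 1$ consecutive edges as subsegments of $\ell$. Summing gives
$$|E(G)| = \sum_{\ell \in \L,\, k_\ell \ge 1} (k_\ell - 1) \ge I(V,\L) - |\L|,$$
and the drawing of $G$ has at most $\binom{|\L|}{2}$ crossings, since two distinct lines meet in at most one point.

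Next I would invoke the crossing-number inequality of Ajtai--Chv\'atal--Newborn--Szemer\'edi and Leighton: any simple graph with $n$ vertices and $e \ge 4n$ edges, drawn in the plane, has at least $c \cdot e^3 / n^2$ crossings for an absolute constant $c > 0$. Applying this to $G$ and combining with the $|\L|^2/2$ upper bound on crossings yields
$$|E(G)|^3 \lesssim |V|^2 \, |\L|^2,$$
hence $|E(G)| \lesssim |V|^{2/3} |\L|^{2/3}$, provided the hypothesis $|E(G)| \ge 4|V|$ holds. Substituting back gives $I(V,\L) \lesssim |V|^{2/3}|\L|^{2/3} + |\L|$.

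Finally I would handle the degenerate case: if $|E(G)| < 4|V|$, then $I(V,\L) \le |E(G)| + |\L| \lesssim |V| + |\L|$ directly. Combining both regimes yields the stated bound $I(V,\L) \lesssim |V|^{2/3}|\L|^{2/3} + |V| + |\L|$.

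The only nontrivial input is the crossing-number inequality, which I would treat as a black box; the standard derivation is a short probabilistic deletion argument from Euler's formula (the weak bound $\mathrm{cr}(G) \ge e - 3n$ from planarity, boosted by sampling vertices with probability $p \approx n/e$ and optimizing). The main conceptual obstacle is really the setup of the graph: one has to resist the temptation to use all $\binom{k_\ell}{2}$ pairs on each line, since inflating $|E(G)|$ beyond the consecutive-pair count would no longer correspond to a valid planar drawing with few crossings, and the cubic dependence in the crossing inequality would then be wasted.
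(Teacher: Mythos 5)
Your argument is correct, but note that the paper does not prove Theorem~\ref{thm-Szemeredi-Trotter} at all: it is imported as a black box from \cite{ST83}, with the complex version (the one actually used in the rest of the paper) cited from \cite{Toth,Zahl}. So there is no in-paper proof to compare against; what you have written is the standard Sz\'ekely crossing-number proof, which is a genuinely different (and much shorter) route than the original Szemer\'edi--Trotter cell-decomposition argument. The details check out: the consecutive-pair graph has $|E(G)|\ge I(V,\L)-|\L|$, it is simple because two distinct points of $V$ lie on at most one line of $\L$ (worth stating explicitly, since the crossing-number inequality needs a simple graph), the drawing has at most $\binom{|\L|}{2}$ crossings because two lines meet in at most one point, and the two regimes $|E(G)|\ge 4|V|$ and $|E(G)|<4|V|$ combine to give $I(V,\L)\lesssim |V|^{2/3}|\L|^{2/3}+|V|+|\L|$. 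The one caveat relevant to this paper is that your proof is intrinsically planar--topological: it does not transfer to $\C^2$, where the ``drawing'' lives in real dimension four and the crossing-number inequality has no analogue. This is exactly why the paper invokes T\'oth's and Zahl's separate arguments for Theorem~\ref{thm-r-richlines-2dimensions}, so your proof establishes the real statement as cited but should not be read as covering the complex version used later.
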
  
This fundamental theorem has found many applications in various areas (see \cite{Dvir-survey} for some examples) and is  known to also hold in the 
complex plane $\C^2$ \cite{Toth,Zahl}.  In recent years there has been a growing interest in high dimensional variants of line-point incidence bounds \cite{SSZ13, Kollar14, Rudnev14, SS14, ST12,BS14}. This is largely due to the breakthrough results of Guth and Katz \cite{GK10} who proved the Erd\"os distinct distances conjecture. One of the main ingredients in their proof was an incidence theorem for configurations of lines in $\R^3$  satisfying some  `truly 3 dimensional' condition (e.g, not too many lines in a plane). The intuition is that, in high dimensions, it is `harder' to create many incidences between points and lines. This intuition is of course false if our configuration happens to lie in some low dimensional space. In this work we prove stronger line-point incidence bounds for sets of points that do not contain a large low dimensional subset. 

To state our main theorem we first  restate the  Szemer\'edi-Trotter theorem as a bound on the number of $r$-rich lines (lines containing at least $r$ points)  in a given set of points. Since our results will hold over the complex numbers we will switch now from $\R$ to $\C$. The complex version of Szemeredi-Trotter was first proved by Toth \cite{Toth} and then proved using different methods by Zahl \cite{Zahl}. For a finite set of points $V$, we denote by $\cL_r(V)$ the set of $r$-rich lines in $V$. The following is equivalent to Theorem-\ref{thm-Szemeredi-Trotter} (but stated over $\C$).

\begin{thm}[\cite{Toth,Zahl}]
\label{thm-r-richlines-2dimensions}
Given a set  $V$ of $n$ points in $\C^2$, for $r\ge 2$,
$$|\cL_r(V)|\lesssim \frac{n^2}{r^3}+\frac{n}{r}.$$
\end{thm}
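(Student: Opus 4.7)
The plan is to deduce Theorem \ref{thm-r-richlines-2dimensions} directly from the complex Szemer\'edi--Trotter incidence theorem (Theorem \ref{thm-Szemeredi-Trotter} over $\C^2$, as established in \cite{Toth,Zahl}) by a standard incidence-counting argument. Write $m=|\cL_r(V)|$. The key observation is a trivial lower bound on incidences: every line in $\cL_r(V)$ contains at least $r$ points of $V$, so
$$I(V,\cL_r(V)) \ge rm.$$

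Next I would feed this into the upper bound supplied by Theorem \ref{thm-Szemeredi-Trotter}, applied to the point set $V$ and the line set $\cL_r(V)$:
$$rm \;\le\; I(V,\cL_r(V)) \;\lesssim\; n^{2/3}m^{2/3} + n + m.$$
Now I would perform a case analysis according to which of the three terms on the right dominates:
\begin{itemize}
\item If $n^{2/3}m^{2/3}$ dominates, then $rm \lesssim n^{2/3}m^{2/3}$, so $m^{1/3}\lesssim n^{2/3}/r$, which rearranges to $m \lesssim n^2/r^3$.
\item If $n$ dominates, then $rm \lesssim n$, giving $m \lesssim n/r$.
\item If $m$ dominates, then $rm \lesssim m$, which forces $r$ to be bounded by an absolute constant. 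By choosing the absolute constant in the $\lesssim$ notation appropriately, this regime either does not arise or can be absorbed into the other two bounds (for instance by the trivial estimate $m \le \binom{n}{2} \lesssim n^2/r^3$ when $r$ is bounded).
\end{itemize}
Combining the two surviving cases yields $m \lesssim n^2/r^3 + n/r$, as required.

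This reduction is entirely formal, so the only real content is the underlying Szemer\'edi--Trotter incidence bound itself (which we are quoting as a black box). The only minor subtlety is handling the third term $m$ in the incidence bound uniformly in $r \ge 2$; I expect this to be cosmetic rather than substantive, since for any bounded $r$ the statement is weaker than the trivial bound $|\cL_r(V)| \le \binom{n}{2}$. The converse reduction (recovering Theorem \ref{thm-Szemeredi-Trotter} from Theorem \ref{thm-r-richlines-2dimensions}) is also standard via dyadic decomposition, which confirms the claimed equivalence of the two formulations.
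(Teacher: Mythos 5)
Your reduction is correct and is exactly what the paper has in mind: Theorem~\ref{thm-r-richlines-2dimensions} is quoted from \cite{Toth,Zahl} as an equivalent restatement of the complex Szemer\'edi--Trotter bound, and the standard argument $rm \le I(V,\cL_r(V)) \lesssim n^{2/3}m^{2/3}+n+m$ with the three-case analysis (including absorbing the bounded-$r$ case via $m\le\binom{n}{2}$) is the intended routine derivation, which the paper does not spell out. No gap; your handling of the third term is the right way to make the statement uniform in $r\ge 2$.
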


Theorem~\ref{thm-r-richlines-2dimensions} is tight since a two dimensional square grid of $n$ points contains $\gtrsim n^2/r^3$ lines that are $r$-rich. We might then ask whether a $d$-dimensional grid $G_d = \{1,2,\ldots,h\}^d$, with $h \approx n^{1/d}$, has asymptotically the maximal number of $r$-rich lines among all $n$-point configurations that do not have a large low dimensional subset. It can be shown that for $r\ll_d n^{1/d}$,$$|\cL_r(G_d)| \approx_d \frac{n^2}{r^{d+1}},$$  where the subscript $d$ denotes that the constants in the inequalities may depend on the dimension $d$~\cite{SoVu04}.
Clearly, we can obtain a larger number of rich lines in $\C^d$ if $V$ is a union of several low dimensional grids. For example, for some $\alpha\gg_d 1$ (We use $A \gg B$ to mean $A \geq C\cdot B$ for some sufficiently large constant $C$) and $d>\ell>1$, we can  take a disjoint union of $r^{d-\ell}/\alpha$ $\ell$-dimensional grids $G_{\ell}$ of size $\alpha n/r^{d-\ell}$ each. Each of these grids will have $\gtrsim_d \alpha^2n^2/r^{2d-\ell+1}$ $r$-rich lines and so, together we will get $\gtrsim_d \alpha n^2/r^{d+1}$ rich lines. We can also take a union of $n/r$ lines containing $r$ points each, to get more $r$-rich lines than in the $d$-dimensional grid $G_d$ when $r\gg_d n^{1/d}$. We thus arrive at the following conjecture which, if true, would mean that the best one can do is to paste together a number of grids as above.

\begin{conjecture}\label{conj-rich}
For $r\ge 2$, suppose $V \subset \C^d$ is a set of $n$ points with $$|\cL_r(V)| \gg_d \frac{n^2}{r^{d+1}}+\frac{n}{r}.$$ Then there exists $1<\ell<d$ and  a subset $V' \subset V$ of size $ \gtrsim_d n/r^{d-\ell}$ which is contained in an $\ell$-flat (i.e., an $\ell$-dimensional affine subspace).
\end{conjecture}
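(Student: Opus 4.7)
The plan is to apply the polynomial method through the degree $r-2$ Veronese embedding $\nu : \C^d \to \C^N$, where $N=\binom{d+r-2}{d}$ and the coordinates of $\nu$ are the monomials of degree at most $r-2$ in $d$ variables. The key observation is that if $p_1,\ldots,p_r$ are collinear on a line $\{a+tb : t\in\C\}$, each coordinate of $\nu(a+tb)$ is a polynomial in $t$ of degree at most $r-2$, so the images $\nu(p_1),\ldots,\nu(p_r)$ lie in an affine subspace of $\C^N$ of dimension at most $r-1$ and therefore satisfy a nontrivial linear dependence. This converts an incidence on a line into a linear relation in $\C^N$ without any interpolation step.

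From each $r$-rich line $L\in\cL_r(V)$ I would extract such a dependence among $r$ of its points and record it as a row of a matrix $A$ whose columns are indexed by $V$. Since two distinct points of $V$ lie on at most one common line, any two columns of $A$ are simultaneously nonzero in at most one row, so $A$ is a \emph{design matrix} in the sense of \cite{BDWY12}. Applying the design-matrix rank bound to $A$, and noting that $A$ annihilates the vector of Veronese images, controls the affine rank of the image set $\{\nu(v_1),\ldots,\nu(v_n)\}\subset\C^N$. When $|\cL_r(V)|$ is sufficiently larger than $n^2/r^d$, the rank deficit exceeds $N-1$, forcing all the Veronese images into a single hyperplane. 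Pulling that hyperplane back through $\nu$ produces a nonzero polynomial $P\in\C[x_1,\ldots,x_d]$ of degree at most $r-2$ vanishing on every point of $V$.

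The remaining step is geometric: pass from ``$V$ sits in a hypersurface $\{P=0\}$ of degree at most $r-2$'' to ``an $\ell$-flat contains $\gtrsim_d n/r^{d-\ell}$ points of $V$.'' My attempt would be to decompose $\{P=0\}$ into irreducible components and pick one $W$ containing a $\gtrsim_d 1/r$ fraction of $V$. If $W$ is linear we set $\ell=\dim W$ and are done. Otherwise I would argue that any $r$-rich line meeting $W$ in more than $\deg W$ points must lie on $W$, restrict $V$ to $V\cap W$, and recurse on the pair $(V\cap W,\, W)$, using induction on the dimension (or on the degree of the ambient variety) to peel off genuinely flat pieces.

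The step I expect to be hardest is closing the gap between the bound $n^2/r^d$ that this Veronese-plus-design-matrix scheme naturally yields and the conjectured $n^2/r^{d+1}$. Each rich line provides only one linear dependence among the $\nu(v_i)$, whereas the extremal grid configurations effectively pack $r-1$ independent dependencies per rich line through the Veronese picture; the argument therefore loses a factor of roughly $r$ in the exponent. Bridging this loss would likely require either a higher-order Veronese/jet embedding engineered to extract more relations per line, or an iterated peeling procedure producing many independent low-degree polynomials rather than just one, while preserving the design-matrix parameters and the flat-extraction quality throughout the induction.
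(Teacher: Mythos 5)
The statement you are asked to prove is Conjecture~\ref{conj-rich}, which the paper itself does not prove: it is stated as an open conjecture (known in $\R^3$ via \cite{GK10} and in a weaker form in $\R^4$ via \cite{SS14}), and the paper's own contribution, Theorem~\ref{thm-manyrichlines}, is deliberately weaker in two ways --- the hypothesis has $n^2/r^{d}$ in place of $n^2/r^{d+1}$, and the conclusion only produces a $(d-1)$-flat containing $\gtrsim_d n/r^{d-2}$ points rather than an $\ell$-flat with $\gtrsim_d n/r^{d-\ell}$ points for the correct $\ell$. Your proposal reconstructs essentially the paper's actual mechanism for that weaker theorem (Veronese embedding of degree $r-2$, local linear dependencies from rich lines, the design-matrix rank bound of \cite{BDWY12,DSW12}, and a vanishing polynomial of degree at most $r-2$), and you candidly identify that this scheme loses a factor of $r$ and does not reach the conjectured exponent. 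That honesty is correct, but it means the proposal does not prove the statement: the loss of the factor $r$ and the inability to detect flats of intermediate dimension $\ell<d-1$ are precisely the open content of the conjecture, and no idea in the proposal bridges either gap.

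Two further points where the argument as written is not sound even for the weaker goal. First, taking one dependence (one row) per rich line does not give the design-matrix parameters you need: a column's support is the number of recorded tuples containing that point, which can be zero; the paper instead covers every point of $L\cap V$ by $r$-tuples on each line (so pairs of columns can share support in up to two rows, not one), and, more importantly, must first pass to a refined subset (Lemma~\ref{lem-refine} plus dyadic pigeonholing) on which \emph{every} point lies on many rich lines --- this is not automatic from $|\cL_r(V)|$ being large and is where the quantitative bookkeeping happens. One also needs the ``moreover'' part of Claim~\ref{cla-veronese-line} to know all coefficients in the dependence are nonzero. Second, your flat-extraction step is not a proof: after obtaining the hypersurface $\{P=0\}$ you propose to pick an irreducible component $W$ with many points and recurse if $W$ is nonlinear, but you give no mechanism by which a nonlinear component of degree up to $r-2$ ever yields a genuine $\ell$-flat with the conjectured $n/r^{d-\ell}$ points; the degree and point-count budgets degrade under such a recursion, and indeed the possibility that points concentrate on low-degree nonlinear hypersurfaces is exactly why the paper suggests the conjecture may need to be weakened (see the discussion after Theorem~\ref{thm-conj-d=4}). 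The paper's own endgame is different and only reaches a hyperplane: it takes $P$ of minimal degree, shows not every point can be singular (else a partial derivative of lower degree vanishes on the refined set), and uses a flat point, joints-style, to place $k_0(r_0-1)$ points in a hyperplane. So your proposal, once the refinement details are added, recovers Theorem~\ref{thm-manyrichlines}, but the conjecture itself remains unproven by it.
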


This conjecture holds in $\R^3$ \cite{GK10} and, in a slightly weaker form, in $\R^4$ \cite{SS14}. We compare these two results with ours later in the introduction.
Our main result makes a step in the direction of this conjecture. First of all, our bound is off by a factor of $r$ from the optimal bound (i.e., with $n^2/r^{d}$ instead of $n^2/r^{d+1}$). Secondly, we are only able to detect a $d-1$ dimensional subset (instead of finding the correct $\ell$ which may be smaller).

\begin{THM}\label{thm-manyrichlines}
For all $d \geq 1$ there exists constants $C_d,C_d'$ such that the following holds. Let $V \subset \C^d$ be a set of $n$ points and let $r \geq 2$ be an integer. Suppose that for some $\alpha\ge 1$, $$|\cL_r(V)|\ge C_d\cdot\alpha\cdot \frac{n^2}{r^{d}}.$$  Then, there exists a subset $\tilde V \subset V$ of size at least $C_d'\cdot\alpha\cdot \frac{n}{r^{d-2}}$  contained in a $(d-1)$-flat. We can take the constants $C_d,C_d'$ to be $d^{cd},d^{c'd}$ for  absolute constants $c,c'>0$.
\end{THM}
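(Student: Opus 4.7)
Following the strategy sketched in the abstract, the first step is to apply the degree $r-2$ Veronese embedding $\phi\colon\C^d\to\C^D$, with $D=\binom{d+r-2}{d}$, which sends $v$ to the vector of all monomials of degree $\leq r-2$ in the coordinates of $v$. Restricted to any affine line, $\phi$ is a polynomial parametrization of degree $\leq r-2$, so the image of the line spans an affine subspace of dimension $\leq r-1$. Consequently, every collection of $r$ collinear points $v_{i_1},\dots,v_{i_r}$ yields a nontrivial linear dependency among $\phi(v_{i_1}),\dots,\phi(v_{i_r})$ supported exactly on the indices $i_1,\dots,i_r$.

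\textbf{Design matrix and global vanishing polynomial.} Assemble one such dependency per $r$-rich line into a matrix $M$ of size $|\cL_r(V)|\times n$ with exactly $r$ nonzero entries per row, so that $MA=0$ where $A$ is the $n\times D$ matrix whose rows are $\phi(v_i)^T$. After a cleaning step removing points of anomalous line-degree, the hypothesis $|\cL_r(V)|\gtrsim_d\alpha n^2/r^d$ lets one verify the row and column regularity needed to apply the stable rank lower bound of \cite{BDWY12}, forcing $\mathrm{rank}(M)\geq n-D+1$ after discarding a small bad set of columns. Then $\{\phi(v_i):i\in S\}$ lies in a common hyperplane of $\C^D$ for some $S\subset[n]$ of size $(1-o(1))n$, and pulling this hyperplane back through $\phi$ yields a nonzero $P\in\C[x_1,\dots,x_d]$ of degree $\leq r-2$ vanishing on $V':=\{v_i:i\in S\}$. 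Since $\deg P<r$, every $r$-rich line contained in $V'$ in fact lies entirely inside the hypersurface $V(P)$.

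\textbf{From hypersurface to flat.} The last and most delicate step, and the one I expect to be the main obstacle, is to descend from the degree $r-2$ hypersurface $V(P)$ to a genuine $(d-1)$-flat containing $\gtrsim_d\alpha n/r^{d-2}$ points of $V$. I would factor $P=P_1\cdots P_s$ into irreducibles ($s\leq r-2$); since each $r$-rich line is irreducible and lies in exactly one $V(P_i)$, pigeonhole singles out some component carrying $\gtrsim_d\alpha n^2/r^{d+1}$ rich lines. If this component is linear ($\deg P_i=1$), it is itself the required hyperplane and a direct incidence count recovers the desired number of points of $V$ on it. Otherwise I would induct on $d$: apply a generic linear projection $\pi\colon\C^d\to\C^{d-1}$, chosen so that all collinearities among the points in $V\cap V(P_i)$ are preserved in the image, invoke the theorem in dimension $d-1$ to find a $(d-2)$-flat with many image points, and pull this flat back to a $(d-1)$-flat of $\C^d$ containing the corresponding preimages. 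Controlling the decay of the effective parameter $\alpha$ across this recursion so that the accumulated constants stay of the claimed form $d^{O(d)}$ is the technical heart of the argument.
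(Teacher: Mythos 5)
Your first two steps track the paper's strategy (degree $r-2$ Veronese embedding, local linear dependencies, design-matrix rank bound), but even there one detail as written does not work: taking \emph{one} dependency per $r$-rich line gives no control on the column supports, since a point lying on a line with more than $r$ points of $V$ need not belong to the chosen $r$-tuple of that line, so its column could even be empty. The paper fixes this by building, for each rich line $L$, a small covering family of $r$-tuples of $L\cap V$ (every point of $L\cap V$ in at least one tuple, every pair in at most two), and by first refining the configuration (graph refinement plus dyadic pigeonholing) so that every surviving point lies on roughly the same number $k_0$ of $r_0$-rich lines; the rank bound then puts \emph{all} Veronese images of the refined set in a hyperplane, with no discarding of columns. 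These are repairable issues.

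The genuine gap is your third step. The factor--project--induct scheme does not close quantitatively. After pigeonholing over the at most $r-2$ irreducible components you have a component carrying $\gtrsim_d \alpha n^2/r^{d+1}$ rich lines among some $m\le n$ of the points; to invoke the theorem in dimension $d-1$ for the projected configuration you need at least $C_{d-1}\alpha' m^2/r^{d-1}$ rich lines with $\alpha'\ge 1$, which forces $\alpha'\approx \alpha (n/m)^2/r^2$. When the component contains a constant fraction of the points (e.g.\ an irreducible quadric through almost all of $V$), this is $\approx \alpha/r^2<1$ and the inductive hypothesis simply cannot be invoked; and even granting it, the flat it produces contains only $\approx \alpha' m/r^{d-3}\approx \alpha n/r^{d-1}$ points, a factor $r$ short of the target $\alpha n/r^{d-2}$. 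The underlying obstruction is that the hypothesis threshold $n^2/r^d$ becomes \emph{more} demanding as the dimension drops, so no dimension reduction by generic projection can help on its own. The paper finishes instead with a Guth--Katz joints-style argument: take $f$ of \emph{minimal} degree vanishing on the refined set $V''$; since $\deg f\le r_0-2$, $f$ vanishes identically on every line of the refined line set; if every point of $V''$ were a ``joint'' (its lines not contained in a hyperplane), then $\nabla f$ would vanish on all of $V''$ and some partial derivative would be a nonzero polynomial of smaller degree vanishing on $V''$, contradicting minimality. Hence some point $v$ is ``flat,'' and the hyperplane through $v$ containing all of the at least $k_0$ rich lines through $v$ already contains $(r_0-1)k_0\gtrsim_d \alpha n/r^{d-2}$ points of $V$, with no induction on $d$ and no loss in $\alpha$. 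You should replace your factorization-plus-projection recursion by this (or an equivalent) argument.
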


Notice that the theorem is only meaningful when $r \gg d$ (otherwise the factor $r^d$ in the assumption will be swallowed by the constant $C_d$). On the other hand, if $r \gg n^{1/(d-1)}$ then the conclusion always holds. Hence, the theorem is meaningful when $r$ is in a `middle' range. Notice also that for $d=2,3$ and $r$ sufficiently small, the condition of the theorem also cannot hold, by the Szemeredi-Trotter theorem. However, when $d$ becomes larger, our theorem gives non trivial results (and becomes closer to optimal for large $d$). The proof of Theorem~\ref{thm-manyrichlines} actually  shows (Lemma~\ref{lem-findpoly})  that, under the same hypothesis, most of the rich lines must be contained in a  hypersurface of degree smaller than $r$. This in itself can be very useful, as we will see in the proof of Theorem~\ref{thm-newsumproduct} which uses this fact to prove certain sum-product estimates. The existence of such a low-degree hypersurface containing most of the curves can also be obtained when there are many $r$-rich curves of bounded degree with `two degrees of freedom' i.e. through every pair of points there are at most $O(1)$ curves (see Remark~\ref{rem-bounded-degree-curves}).

\paragraph*{Counting arithmetic progressions}
 An $r$-term arithmetic progression in $\C^d$ is simply a set of $r$ points of the form $\{y,y+x,y+2x, \ldots,y+(r-1)x\}$ with $x,y \in \C^d$. This is a special case of $r$ collinear points and, for this case, we can derive a tighter bound than for the general case. In a nutshell, we can show that a $d$-dimensional grid contains the largest number of $r$-term progressions, among all sets that do not contain a large $d-1$ dimensional subset. The main extra property of  arithmetic progressions we use in the proof is that they behave well under products. That is, if we take a Cartesian product of $V$ with itself, the number of progressions of length $r$ squares. 

For a finite set $V \subset \C^d$, let us denote the number of $r$-term arithmetic progressions contained in $V$ by $\AP_r(V)$. We first observe that, for all sufficiently small $r$, the grid $G_d$ (defined above) contains at least $\gtrsim_d n^2/r^d$ $r$-term progressions. To see where the extra factor of $r$ comes from, notice that the $2r$-rich lines in $G_d$ will contain $r$ arithmetic progressions of length $r$ each. Our main theorem shows that this is optimal, as long as there is no large low-dimensional set.
  
\begin{THM}\label{thm-apbound}
Let $0 < \epsilon < 1$ and $V \subset \C^d$ be a set of size $n$ and suppose that for some $r \geq 4$ we have $$ \AP_r(V) \gg_{d,\epsilon} \frac{n^2}{r^{d-\eps}}.$$ Then, there exists a subset $\tilde V \subset V$ of size $\gtrsim_{d,\epsilon} \frac{n}{r^{{2d/\eps}-1}}$  contained in a hyperplane.
\end{THM}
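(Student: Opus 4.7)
The plan is to bootstrap Theorem~\ref{thm-manyrichlines} via a Cartesian-product argument, exploiting the fact that the number of $r$-APs multiplies under products while the ``grid rate'' $n^2/r^{d}$ scales with the ambient dimension. Fix an integer $t$ close to $2/\eps$ (chosen so that $dt - 2 \leq 2d/\eps - 1$) and consider $V^t \subset \C^{dt}$, which has $n^t$ points. An $r$-AP in $V^t$ is obtained by choosing, for each of the $t$ coordinate blocks, either an $r$-AP in $V$ or a constant point in $V$, with at least one block a proper AP; this yields
$$\AP_r(V^t) = (\AP_r(V)+n)^t - n^t \geq \AP_r(V)^t \geq C_{d,\eps}^t \cdot \frac{n^{2t}}{r^{t(d-\eps)}}.$$

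Next I would pass from APs to $r$-rich lines. Every $r$-AP lies on some $r$-rich line, and a line carrying $m$ points supports at most $O(m^2)$ $r$-APs (each determined by its first and last terms). Let $M$ denote the maximum number of $V$-points on any line in $\C^d$. A direct check shows that the maximum number of $V^t$-points on any line in $\C^{dt}$ is also $M$: if two of the line's block-direction vectors are nonzero, the valid line-parameters form the intersection of two sets each of size at most $M$; if only one is nonzero, the intersection equals the number of $V$-points on the corresponding line in $\C^d$. Hence $|\cL_r(V^t)| \gtrsim \AP_r(V^t)/M^2$.

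Now split on $M$. If $M \gtrsim_{d,\eps} n/r^{2d/\eps-1}$, the line witnessing $M$ already lies in some hyperplane of $\C^d$, giving the conclusion directly. Otherwise the lower bound on $|\cL_r(V^t)|$ exceeds the threshold $C_{dt}\,(n^t)^2/r^{dt}$ of Theorem~\ref{thm-manyrichlines} with some $\alpha \geq 1$, producing a hyperplane $H \subset \C^{dt}$ with $|V^t \cap H| \gtrsim n^t/r^{dt-2}$. Writing $H = \{(v_1,\ldots,v_t) : \sum_i \ell_i(v_i) = c\}$ with some $\ell_{i_0} \not\equiv 0$ and pigeonholing over the other $t-1$ blocks gives $c^* \in \C$ such that $\{v \in V : \ell_{i_0}(v) = c^*\}$ contains $\geq |V^t \cap H|/n^{t-1} \gtrsim n/r^{dt-2} \geq n/r^{2d/\eps - 1}$ points of $V$. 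The main obstacle is calibrating the threshold on $M$ in the dichotomy so that both cases meet the stated bound while supplying enough rich lines in $V^t$ to invoke Theorem~\ref{thm-manyrichlines} with $\alpha \geq 1$; this balance is what forces the particular exponent $2d/\eps - 1$ in the conclusion and the constants depending on both $d$ and $\eps$.
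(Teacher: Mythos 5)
Your product step and your final descent step are sound and mirror the paper (Claim~\ref{cla-aptensor} and Lemma~\ref{lem-hyperplaneproduct}), but the conversion from arithmetic progressions to rich lines has a genuine gap that your dichotomy on $M$ cannot repair. In your second case you need the lower bound $|\cL_r(V^t)| \gtrsim \AP_r(V)^t/M^2 \gg_{d,\eps} n^{2t}/(r^{t(d-\eps)}M^2)$ to clear the threshold $C_{dt}\,(n^t)^2/r^{dt}$ of Theorem~\ref{thm-manyrichlines}; comparing exponents this forces $M^2 \lesssim_{d,\eps} r^{t\eps}$, i.e.\ $M \lesssim r^{t\eps/2}$. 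Since the conclusion exponent pins $t$ down to $t \leq 2/\eps + 1/d$, this means case~2 only applies when $M \lesssim r^{1+\eps/(2d)}$, whereas case~1 only applies when $M \gtrsim_{d,\eps} n/r^{2d/\eps-1}$. Whenever $n \gg r^{2d/\eps + 1}$ (e.g.\ $r$ fixed or $r = n^{c}$ for small $c$, which is exactly the interesting regime) there is a large uncovered middle range $r^{1+\eps/(2d)} \ll M \ll n/r^{2d/\eps-1}$ in which neither branch yields anything: a single line with that many points (added to a configuration whose APs live elsewhere) already defeats the argument. Using the sharper count of at most $O(M^2/r)$ progressions per line does not help, as it only moves the case~2 ceiling to about $r^{3/2}$. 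So the calibration you flag as ``the main obstacle'' is not merely delicate; it is impossible as stated.

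The paper's way around this is precisely the ingredient your proposal is missing: an \emph{injective, lossless} passage from progressions to rich lines. Instead of looking at lines inside $V^\ell$ (where many APs can share one line, producing the $M^2$ loss), the paper lifts to $[r]\times V^\ell \subset \C^{1+d\ell}$ and maps each $r$-term progression $(u,u+w,\ldots,u+(r-1)w)$ to the line through $(0,u)$ with direction $(1,w)$, which is $r$-rich with respect to $[r]\times V^\ell$ and is distinct for distinct progressions (Claim~\ref{cla-ap_to_richline}). This gives $|\cL_r([r]\times V^\ell)| \geq \AP_r(V)^\ell$ with no loss, so Theorem~\ref{thm-manyrichlines} applies directly with $\ell = \lceil 1/\eps\rceil$ and no case analysis on $M$; the remaining (small) subtlety, handled via Remark~\ref{rem-hyperplane_lines}, is to rule out that the resulting hyperplane is one of the slices $\{z_1 = i\}$ before slicing and then descending with Lemma~\ref{lem-hyperplaneproduct} exactly as in your last paragraph. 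If you replace your AP-to-line conversion by this lifted embedding, the rest of your outline goes through.
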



\subsection{Related Work}
Using the incidence bound between points and lines in $\R^3$ proved in~\cite{GK10}, one can prove the following theorem from which Conjecture~\ref{conj-rich} in $\R^3$ trivially follows (see Appendix~\ref{sec-stronger-thm1}).

\begin{thm}[\cite{GK10}]
\label{thm-conj-d=3}
Given a set $V$ of $n$ points in $\R^3$, let $s_2$ denote the maximum number of points of $V$ contained in a $2$-flat. Then for $r\ge 2$,
$$|\L_r(V)|\lesssim \frac{n^2}{r^4}+\frac{ns_2}{r^3}+\frac{n}{r}.$$
\end{thm}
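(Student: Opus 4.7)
The plan is to derive Theorem~\ref{thm-conj-d=3} as a direct consequence of the point--line incidence theorem of Guth--Katz in $\R^3$: for any set $P$ of points and collection $\mathcal{L}$ of lines in $\R^3$ such that no plane contains more than $B$ lines of $\mathcal{L}$,
$$I(P,\mathcal{L}) \lesssim |P|^{1/2} |\mathcal{L}|^{3/4} + |P|^{2/3} |\mathcal{L}|^{1/3} B^{1/3} + |P| + |\mathcal{L}|.$$
I will apply this with $P=V$ and $\mathcal{L}=\cL_r(V)$, so the task reduces to pinning down a good value of $B$ and then rearranging.

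To control $B$, observe that any plane $\pi$ meets $V$ in at most $s_2$ points, and any line of $\cL_r(V)$ contained in $\pi$ is $r$-rich with respect to $V\cap\pi$. The planar Szemerédi--Trotter bound (Theorem~\ref{thm-r-richlines-2dimensions}) then gives at most $O(s_2^2/r^3 + s_2/r)$ such lines, so we may take $B\lesssim s_2^2/r^3 + s_2/r$.

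Writing $L:=|\cL_r(V)|$ and using that every line of $\cL_r(V)$ contributes at least $r$ incidences, we have $rL\le I(V,\cL_r(V))$. The Guth--Katz bound, combined with $r\ge 2$ to absorb the trailing $L$ term on the right, yields
$$rL \lesssim n^{1/2} L^{3/4} + n^{2/3} L^{1/3} B^{1/3} + n.$$
Isolating $L$ under each of the three terms gives, respectively, $L\lesssim n^2/r^4$, $L\lesssim nB^{1/2}/r^{3/2}$, and $L\lesssim n/r$. For the middle term I split on $B$: when $s_2\ge r^2$ the contribution $s_2^2/r^3$ dominates in $B$, giving $nB^{1/2}/r^{3/2}\lesssim ns_2/r^3$; when $s_2<r^2$ the contribution $s_2/r$ dominates, so $nB^{1/2}/r^{3/2}\lesssim ns_2^{1/2}/r^2\le n/r$, which is absorbed into the third term. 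Summing the three cases then gives the claimed inequality.

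The main technical point — and the only place where the argument is more than routine bookkeeping — is invoking the Guth--Katz incidence bound in exactly the stated form, since their original theorem is phrased in terms of the intersection pattern of a family of lines rather than directly as a point--line incidence bound; the passage to the incidence form above is by now standard but worth spelling out carefully, and one should also double-check that no additional non-degeneracy hypothesis (e.g.\ a bound on lines in a common regulus) is needed in this setting.
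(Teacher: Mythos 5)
Your proposal is correct and follows essentially the same route as the paper: apply the Guth--Katz incidence bound in $\R^3$ (the paper's Theorem~\ref{thm-incidencesinR3}, stated exactly in the point--line form with the parameter $q_2$ bounding lines in a plane, so no regulus-type hypothesis is needed), bound the number of coplanar rich lines via the planar bound of Theorem~\ref{thm-r-richlines-2dimensions} applied to the at most $s_2$ points in that plane, and then rearrange $rL \le I(V,\cL_r(V))$. The only cosmetic difference is that you handle the middle term $nB^{1/2}/r^{3/2}$ by a case split on whether $s_2 \ge r^2$, whereas the paper absorbs the cross term $ns_2^{1/2}/r^2$ by AM--GM; these are equivalent.
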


Similarly, using the results of in~\cite{SS14}, we can prove the following theorem from which a slightly weaker version of Conjecture~\ref{conj-rich} in $\R^4$ trivially follows (see Appendix~\ref{sec-stronger-thm1}).
\begin{thm}[\cite{SS14}]
\label{thm-conj-d=4}
Given a set $V$ of $n$ points in $\R^4$, let $s_2$ denote the maximum number of points of $V$ contained in a 2-flat and $s_3$ denote the maximum number of points of $V$ contained in a quadric hypersurface or a hyperplane. Then there is an absolute constant $c>0$ such that for $r\ge 2$,
$$|\L_r(V)|\lesssim 	2^{c\sqrt{\log n}}\cdot \left(\frac{n^2}{r^5}+\frac{ns_3}{r^4}+\frac{ns_2}{r^3}+\frac{n}{r}\right).$$
\end{thm}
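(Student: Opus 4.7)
The plan is to derive Theorem~\ref{thm-conj-d=4} from the main point--line incidence theorem of \cite{SS14} in $\R^4$ via a peeling argument that separates lines trapped in low-dimensional algebraic varieties from ``generic'' lines. The theorem of \cite{SS14} gives a strong bound on $|\cL_r(V)|$ under the non-degeneracy assumption that no $2$-flat, hyperplane, or irreducible quadric hypersurface contains too many $r$-rich lines; the extra terms $n s_2/r^3$ and $n s_3/r^4$ in the conclusion will arise precisely when this hypothesis fails, by reducing to Theorem~\ref{thm-r-richlines-2dimensions} and Theorem~\ref{thm-conj-d=3} respectively.

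Concretely, set $L := \cL_r(V)$ and pick thresholds $\tau_2 \asymp s_2/r$ and $\tau_3 \asymp s_3/r$. Call a $2$-flat (resp.\ a hyperplane or irreducible quadric hypersurface) \emph{heavy} if it contains at least $\tau_2$ (resp.\ $\tau_3$) lines of $L$. Partition $L = L_2 \sqcup L_3 \sqcup L_{\mathrm{gen}}$, where $L_2$ are lines lying in some heavy $2$-flat, $L_3$ are the remaining lines lying in some heavy $3$-dimensional variety, and $L_{\mathrm{gen}}$ is what is left. For $L_{\mathrm{gen}}$ the non-degeneracy hypotheses of \cite{SS14} hold by construction, and translating their incidence bound into an $r$-rich-line count yields $|L_{\mathrm{gen}}| \lesssim 2^{c\sqrt{\log n}}(n^2/r^5 + n/r)$. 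For $L_3$, any hyperplane $H$ satisfies $|V \cap H| \le s_3$, so Theorem~\ref{thm-conj-d=3} applied inside $H \cong \R^3$ gives at most $\lesssim s_3^2/r^4 + s_3 s_2/r^3 + s_3/r$ $r$-rich lines per hyperplane (a smooth quadric hypersurface admits the same bound after a generic rational parametrization, or by rerunning the Guth--Katz argument directly on the quadric). Charging each line of $L_3$ to a single heavy $3$-dimensional variety and using that each heavy variety accounts for at least $\tau_3$ lines, a double-counting bounds the number of heavy varieties and gives $|L_3| \lesssim n s_3/r^4 + n s_2/r^3$; an analogous argument with Theorem~\ref{thm-r-richlines-2dimensions} yields $|L_2| \lesssim n s_2/r^3$. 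Summing the three pieces gives the claimed bound.

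The delicate step is the charging argument for $L_3$: since a generic line lies in infinitely many quadric hypersurfaces in $\R^4$, one must select a canonical witness variety for each line, typically the quadric or hyperplane component of a polynomial-partitioning surface of the kind used inside \cite{SS14}. Balancing the thresholds $\tau_2, \tau_3$ so that (i) the number of heavy varieties is small enough to yield the stated coefficients $n/r^3$ and $n/r^4$, while (ii) the non-degeneracy hypothesis of \cite{SS14} is satisfied by $L_{\mathrm{gen}}$ with enough slack to absorb the $2^{c\sqrt{\log n}}$ factor, also requires some care; fortunately the shape of the error terms is forced by dimension counting, so the right choice of $\tau_k$ is essentially $s_k/r$ in each dimension $k$.
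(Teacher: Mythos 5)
Your proposal treats the Sharir--Solomon theorem as a conditional bound requiring a non-degeneracy hypothesis, and then tries to enforce that hypothesis by peeling off lines lying in ``heavy'' $2$-flats, hyperplanes and quadrics. The peeled generic part $L_{\mathrm{gen}}$ is fine (with thresholds $\tau_2\asymp s_2/r$, $\tau_3\asymp s_3/r$ the leftover terms $n s_3^{1/2}/r^{5/2}$ and $n s_2^{1/2}/r^{2}$ are absorbed by AM--GM), but the step you yourself flag as delicate is a genuine gap, not a technicality: to get $|L_3|\lesssim ns_3/r^4+ns_2/r^3$ you need to bound the \emph{number} of heavy $3$-dimensional varieties, and the natural double count fails because a single line of $\cL_r(V)$ lies in a two-parameter family of hyperplanes and an even larger family of quadrics, so there is no bounded ``line per heavy variety'' multiplicity to exploit; heavy varieties can also share rich $2$-flats and points, so counting points per heavy variety does not work either. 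Your proposed fix --- choosing witness varieties from the polynomial-partitioning surfaces inside \cite{SS14} --- amounts to reopening and modifying their proof rather than using their theorem, and is not carried out. A second unjustified step is the claim that an irreducible quadric hypersurface in $\R^4$ obeys the bound of Theorem~\ref{thm-conj-d=3} ``after a generic rational parametrization'': a stereographic projection is only birational, distorts the point set and the $2$-flat statistics ($s_2$ on the quadric does not transfer cleanly), and none of this is checked.

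The paper's route avoids all of this because the version of \cite{SS14} it invokes (Theorem~\ref{thm-incidencesinR4}) is unconditional: the quantities $q_2$ (max lines of $\L$ in a $2$-flat) and $q_3$ (max lines of $\L$ in a hyperplane or quadric) appear directly as terms in the incidence bound, so no peeling or charging is needed. One applies it once to $\L=\cL_r(V)$ to get
$|\L|\lesssim 2^{c\sqrt{\log n}}\bigl(n^2/r^5+n/r\bigr)+nq_3^{1/2}/r^2+nq_2^{1/2}/r^{3/2}$,
then bounds $q_2$ by Theorem~\ref{thm-r-richlines-2dimensions} inside a $2$-flat ($q_2\lesssim s_2^2/r^3+s_2/r$) and bounds $q_3$ by applying Theorem~\ref{thm-incidencesinR4} a second time to the at most $s_3$ points of $V$ on the relevant hyperplane or quadric together with the lines of $\L$ it contains --- no parametrization of the quadric is required, since the configuration still sits in $\R^4$ --- and finishes by substitution and AM--GM. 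If you want to salvage your argument, the cleanest repair is to switch to this unconditional form of the \cite{SS14} bound, at which point the heavy-variety decomposition becomes unnecessary.
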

We are not aware of any examples where  points  arranged on a quadric hypersurface in $\R^4$ result in significantly more rich lines than in a four dimensional grid. It is, however, possible that one needs to weaken  Conjecture~\ref{conj-rich} so that for some $1<\ell<d$, an $\ell$-dimensional hypersurface of constant degree (possibly depending on $\ell$) contains $\gtrsim_d n/r^{d-\ell}$ points. 

To make the comparison with the above theorems easier, Theorem~\ref{thm-manyrichlines} can be stated equivalently as follows:
\begin{thm}[Equiv. to Theorem~\ref{thm-manyrichlines}]
\label{thm-manyrichlines-equiv}
Given a set $V$ of $n$ points in $\C^d$, let $s_{d-1}$ denote the maximum number of points of $V$ contained in a hyperplane. Then for $r\ge 2$,
$$|\L_r(V)| \lesssim_d \frac{n^2}{r^d} + \frac{ns_{d-1}}{r^2}.$$
\end{thm}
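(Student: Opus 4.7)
The plan is to prove the two statements are equivalent by a direct algebraic manipulation; there is no new geometric content involved, only a restatement. The key observation is that the $n s_{d-1}/r^2$ term in Theorem~\ref{thm-manyrichlines-equiv} precisely encodes the ``alternative conclusion'' in Theorem~\ref{thm-manyrichlines} about a large subset lying on a hyperplane. The main (and essentially only) obstacle is just keeping the constants $C_d, C_d'$ consistent so that both directions go through.

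For the direction Theorem~\ref{thm-manyrichlines} $\Rightarrow$ Theorem~\ref{thm-manyrichlines-equiv}, I would split into two cases. If $|\L_r(V)| < C_d \cdot n^2/r^d$ then the bound is immediate from the first term on the right-hand side. Otherwise, write $|\L_r(V)| = C_d \cdot \alpha \cdot n^2/r^d$ for some $\alpha \geq 1$ and apply Theorem~\ref{thm-manyrichlines} with this $\alpha$: it produces a subset $\tilde V \subset V$ of size at least $C_d' \cdot \alpha \cdot n/r^{d-2}$ on a hyperplane, so
\[
s_{d-1} \;\geq\; C_d' \cdot \alpha \cdot \frac{n}{r^{d-2}} \;=\; \frac{C_d'}{C_d}\cdot |\L_r(V)|\cdot \frac{r^2}{n},
\]
which rearranges to $|\L_r(V)| \leq (C_d/C_d') \cdot n s_{d-1}/r^2$. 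This gives the second term and completes the case.

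For the direction Theorem~\ref{thm-manyrichlines-equiv} $\Rightarrow$ Theorem~\ref{thm-manyrichlines}, let $K_d$ be the implicit constant so that $|\L_r(V)| \leq K_d \cdot (n^2/r^d + n s_{d-1}/r^2)$. Set $C_d := 2 K_d$ and suppose the hypothesis $|\L_r(V)| \geq C_d \cdot \alpha \cdot n^2/r^d$ holds with $\alpha \geq 1$. Then
\[
2 K_d \cdot \alpha \cdot \frac{n^2}{r^d} \;\leq\; K_d \cdot \frac{n^2}{r^d} + K_d \cdot \frac{n s_{d-1}}{r^2},
\]
and since $\alpha \geq 1$ the first term on the right is at most half the left-hand side, so $K_d \cdot \alpha \cdot n^2/r^d \leq K_d \cdot n s_{d-1}/r^2$, which rearranges to $s_{d-1} \geq \alpha n/r^{d-2}$. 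Taking $C_d' := 1$ (or any convenient absolute scaling) and letting $\tilde V$ be a set of $s_{d-1}$ points of $V$ in a hyperplane yields the conclusion.

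Both directions are just one-line manipulations once the cases are separated, so the ``proof'' of Theorem~\ref{thm-manyrichlines-equiv} is essentially a repackaging statement; the constants $C_d, C_d' = d^{cd}, d^{c'd}$ from Theorem~\ref{thm-manyrichlines} translate to the same polynomial-in-$d^d$ implicit constants in the $\lesssim_d$ notation here.
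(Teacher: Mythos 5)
Your proposal is correct, and it follows essentially the same routine dichotomy-and-rearrangement argument the paper has in mind (and spells out explicitly for the analogous equivalence of Conjectures~\ref{conj-stronger} and~\ref{conj-equivalent-formulation} in Appendix~\ref{sec-stronger-thm1}): either $|\L_r(V)|$ is already $\lesssim_d n^2/r^d$, or one applies Theorem~\ref{thm-manyrichlines} with $\alpha$ defined by $|\L_r(V)| = C_d\alpha n^2/r^d$ to force $s_{d-1}\gtrsim_d \alpha n/r^{d-2}$ and rearranges, with the converse direction handled symmetrically. No issues with your handling of the constants.
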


In~\cite{SoVu04}, it was shown that $|\L_r(V)|\lesssim_d \frac{n^2}{r^{d+1}}$ when $V\subset \R^d$ is a  {\em homogeneous} set.  This roughly means that the point set is a perturbation of the grid $G_d$. In \cite{LaSo07}, the result was extended for pseudolines and homogeneous sets in $\R^n$ where pseudolines are a generalization of lines which include constant degree irreducible algebraic curves. Adding the homogeneous condition on a set is a much stronger condition (for sufficiently small $r$) than requiring that no large subset belongs to a hyperplane (however, we cannot derive these results from ours since our dependence on $d$ is suboptimal).

\subsection{Overview of the proof}
The main tool used in the proof of Theorem~\ref{thm-manyrichlines} is a rank bound for design matrices. A {\em design matrix} is a matrix with entries in $\C$ and whose support (set of non-zero entries) forms a specific pattern. Namely, the supports of different columns have small intersections, the columns have large support and rows are sparse (see Definition~\ref{def-designmatrix}). Design matrices were introduced in ~\cite{BDWY12,DSW12} to study quantitative variants of the Sylvester-Gallai theorem. These works prove  certain lower bounds on the rank of such matrices, depending only on the combinatorial properties of their support  (see Section~\ref{sec-design-matrices}). Such rank bounds can be used to give {\em upper bounds} on the dimension of point configurations in which there are many `local' linear dependencies. This is done by using the local dependencies to construct rows of a design matrix $M$, showing that its rank is  high and then arguing that the dimension of the original set is small since it must lie in the kernel of $M$.

Suppose we have a configuration of points with many $r$-rich lines. Clearly, $r \geq 3$ collinear points are also linearly dependent. However, this conclusion does not use the fact that $r$ may be larger than 3. To use this information, we observe that a certain map, called the Veronese embedding, takes $r$-collinear points to $r$ linearly dependent points in a larger dimensional space (see Section~\ref{sec-veronese-embedding}). Thus we can create a design matrix using these linear dependencies similarly to the constructions of~\cite{BDWY11,DSW12} to get an upper bound on the dimension of the {\em image} of the original set, under the Veronese embedding. We use this upper bound to conclude that there is a polynomial of degree $r-2$ which contains all the points in our original configuration. We then proceed in a way similar to the proof of the Joints conjecture by Guth and Katz~\cite{GK10b} to conclude that there is a hyperplane which contains many points of the configuration (by finding a `flat' point of the surface).

\subsection{Application: Sum-product estimates}

Here, we show a simple application of our techniques to prove sum product estimates over $\C$. The estimates we will get can also be derived from the  Szemer\'edi-Trotter theorem in the complex plane (see Section~\ref{sec-newsumproduct-usingST}) and we include them only as an example of how to use a higher dimensional theorem in this setting. We hope that future progress on proving Conjecture~\ref{conj-rich} will result in progress on sum product problems.

We begin with some notations. For two sets $A,B \subset \C$ we denote by $A+B = \{a+b \,|\, a,b \in A\}$ the sum set of $A$ and $B$. For a set $A \subset \C$ and a complex number $t \in \C$ we denote by $tA = \{ ta \,|\, a \in A \}$ the dilate of $A$ by $t$. Hence we have that  $A+tA = \{ a + ta'\,|\, a,a' \in A\}$.

\begin{THM}\label{thm-newsumproduct}
Let $A \subset \C$ be a set of $N$ complex numbers and let $1\ll C \ll \sqrt{N}$. Define the set $$T_C = \left\{ t \in \C \,\left|\, |A + tA| \leq \frac{N^{1.5}}{C\sqrt{\log N}} \right. \right\}.$$ Then, $ |T_C| \lesssim \frac{N}{C^2}.$
\end{THM}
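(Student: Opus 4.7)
I would use Elekes's classical reduction from sum-product problems to incidence geometry and then apply the complex Szemer\'edi--Trotter theorem. For each $t \in T_C$ and $c \in A + tA$, let $\ell_{t,c} := \{(x,y) \in \C^2 : y + tx = c\}$. Every $(a,a') \in V := A \times A$ lies on $\ell_{t,\, a'+ta}$ with $a'+ta \in A+tA$, so for each fixed $t$ the lines $\{\ell_{t,c}\}_{c \in A+tA}$ partition $V$.

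Now collect $\L = \{\ell_{t,c} : t \in T_C,\, c \in A+tA\}$; lines with distinct $t$ have distinct slopes, so these are all distinct and
$$|\L| \;=\; \sum_{t \in T_C} |A+tA| \;\leq\; |T_C| \cdot \frac{N^{3/2}}{C\sqrt{\log N}},$$
by the defining property of $T_C$, while $I(V,\L) = N^2 |T_C|$ exactly, since each point of $V$ lies on exactly one line of $\L$ per value of $t \in T_C$. Plugging these into the complex Szemer\'edi--Trotter theorem (Theorem~\ref{thm-Szemeredi-Trotter}, extended to $\C$ in \cite{Toth,Zahl}) gives
$$N^2 |T_C| \;=\; I(V,\L) \;\lesssim\; |V|^{2/3}|\L|^{2/3} + |V| + |\L|.$$

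A short check using $1 \ll C \ll \sqrt{N}$ (and WLOG $|T_C| \geq 1$, else the theorem is vacuous) shows that the first term dominates the right-hand side. Substituting the bound on $|\L|$ and solving then yields
$$|T_C| \;\lesssim\; \frac{N}{C^2 \log N},$$
which implies the claimed bound. The only mild obstacle is the bookkeeping required to confirm dominance of the leading Szemer\'edi--Trotter term throughout the range of $C$, and this is exactly where the hypothesis $C \ll \sqrt{N}$ enters the proof. Although the paper frames this theorem as an illustration of its higher-dimensional techniques --- one can alternatively Veronese-lift $V$ into $\C^{5}$ and invoke the bounded-degree-curves extension of Lemma~\ref{lem-findpoly} discussed in Remark~\ref{rem-bounded-degree-curves}, recovering the same exponent --- the Szemer\'edi--Trotter route above is the shortest path to the stated bound, and as the introduction notes, it is only once Conjecture~\ref{conj-rich} is available that the high-dimensional approach produces strictly stronger sum-product estimates.
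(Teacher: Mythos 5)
Your argument is correct, but it is not the route the paper takes to prove Theorem~\ref{thm-newsumproduct}: the paper's proof of this theorem deliberately runs through its high-dimensional machinery, building the set $V=\bigcup_{t\in Q}\{t\}\times(A+tA)^{d-1}\subset\C^d$ with $d=\lceil 100\log N\rceil$ for a putative large subset $Q\subset T_C$, refining via Lemma~\ref{lem-refine}, invoking Lemma~\ref{lem-findpoly} to obtain a low-degree polynomial vanishing on the configuration, and reaching a contradiction by applying Lemma~\ref{lem-SZ-hom} to the top homogeneous part of that polynomial evaluated at the line directions $\{1\}\times A^{d-1}$. What you propose is essentially the classical Elekes reduction plus complex Szemer\'edi--Trotter, which the paper itself records as an alternative in Section~\ref{sec-newsumproduct-usingST}, in dual form: there the points are $P=\bigcup_{t\in T_C}\{t\}\times(A+tA)$ and the lines are $\{(z,a+za')\}_{z\in\C}$, each $|T_C|$-rich, and Theorem~\ref{thm-r-richlines-2dimensions} is applied; your primal version with $V=A\times A$ and lines $y+tx=c$ is equivalent. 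Your route is shorter and in fact yields a slightly stronger statement (the paper's two-dimensional version removes the $\sqrt{\log N}$ altogether), whereas the paper includes the longer proof precisely to illustrate how Lemma~\ref{lem-findpoly} is deployed, with the hope that progress on Conjecture~\ref{conj-rich} would push such sum-product bounds beyond what Szemer\'edi--Trotter gives --- a point you correctly acknowledge. Two small bookkeeping caveats in your write-up: ``the first term dominates'' is not literally automatic, so you should do the three-term case split (if $|\L|$ dominates you get $N^{2}|T_C|\lesssim |T_C|N^{1.5}/(C\sqrt{\log N})$, impossible for $C\gg 1$; if $|V|=N^2$ dominates you only get $|T_C|\lesssim 1$, which still suffices since $C\ll\sqrt N$ makes $N/C^2\gg 1$, though it does not give your intermediate claim $|T_C|\lesssim N/(C^2\log N)$ in that boundary regime); and to sum over $t\in T_C$ one should a priori work with a finite subset of $T_C$ and bound its size, as the paper implicitly does.
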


By taking $C$ to be a large constant, an immediate corollary is:
\begin{cor}
Let $A \subset \C$ be a finite set. Then $$ | A+ A\cdot A| = |\{ a + bc \,|\, a,b,c \in A\}| \gtrsim \frac{|A|^{1.5}}{\sqrt{\log |A|}}.$$
\end{cor}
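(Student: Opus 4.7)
The plan is to translate the small-sumset hypothesis into many rich lines in $V := A \times A$, lift via the degree-$2$ Veronese embedding to $\C^5$, and apply the curves analogue of Theorem~\ref{thm-manyrichlines-equiv} (guaranteed by Remark~\ref{rem-bounded-degree-curves}). Set $M := N^{3/2}/(C\sqrt{\log N})$ and $r := C\sqrt{N\log N}/2$. For each fixed $t \in T_C$, the $N^2$ points of $V$ lie on the at most $|A+tA| \leq M$ parallel lines $\{x + ty = s : s \in A+tA\}$, each holding at most $N$ points of $V$ (since $(a,b)$ on such a line forces $a = s - tb$). A pigeonhole argument---lines with fewer than $r$ incidences account for $\leq rM = N^2/2$ points, and each rich line carries $\leq N$ points---gives $\geq N/2$ $r$-rich lines per $t$. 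Distinct $t$'s give distinct slopes $-1/t$, so $|\cL_r(V)| \geq |T_C| \cdot N/2$.

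Now apply $\nu_2 : \C^2 \to \C^5$, $(a,b) \mapsto (a,b,a^2,ab,b^2)$. A line $\ell \subset \C^2$ maps to a rational curve of degree $2$ (a conic) in $\C^5$ passing through $\nu_2(p)$ for each $p \in \ell$, so the rich lines lift to $r$-rich conics in $\tilde V := \nu_2(V)$, and any two distinct points of $\tilde V$ lie on a unique such conic (the image of the unique line through their preimages). The ``two degrees of freedom'' hypothesis of Remark~\ref{rem-bounded-degree-curves} therefore holds, and the curves analogue of Theorem~\ref{thm-manyrichlines-equiv} yields
\begin{equation*}
|\cL_r^{\text{conic}}(\tilde V)| \;\lesssim\; \frac{|\tilde V|^2}{r^5} + \frac{|\tilde V| \cdot s_4(\tilde V)}{r^2},
\end{equation*}
where $s_4(\tilde V)$ is the maximum number of points of $\tilde V$ on a hyperplane in $\C^5$. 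Every such hyperplane pulls back under $\nu_2$ to a plane algebraic curve of degree $\leq 2$, which meets $A \times A$ in at most $2N$ points (intersect with the horizontal lines $\{y=b_0\}_{b_0 \in A}$ and apply B\'ezout line-by-line, handling the degenerate case where a horizontal line is a component of the conic separately), so $s_4(\tilde V) \leq 2N$.

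Combining the two bounds and substituting $r = C\sqrt{N\log N}/2$:
\begin{equation*}
\frac{|T_C|\cdot N}{2} \;\lesssim\; \frac{N^4}{r^5} + \frac{2N^3}{r^2} \;\lesssim\; \frac{N^{3/2}}{C^5(\log N)^{5/2}} + \frac{N^2}{C^2\log N},
\end{equation*}
and dividing by $N/2$ gives $|T_C| \lesssim N/(C^2\log N) \lesssim N/C^2$, as claimed. The main technical obstacle is justifying the curves version of Theorem~\ref{thm-manyrichlines-equiv} used above: the design-matrix plus Veronese proof of the main theorem must be adapted from lines to bounded-degree algebraic curves with two degrees of freedom, as sketched in Remark~\ref{rem-bounded-degree-curves}. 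Everything else---the pigeonhole count, the B\'ezout bound on $s_4$, and the final arithmetic---is routine.
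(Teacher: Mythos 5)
The fatal step is the one you yourself flag as the ``main technical obstacle'': the bound you invoke for $r$-rich conics in $\C^5$, namely (number of $r$-rich conics in $\tilde V$) $\lesssim |\tilde V|^2/r^5+|\tilde V|\,s_4(\tilde V)/r^2$, is not provided by the paper and is in fact false. Remark~\ref{rem-bounded-degree-curves} only extends Lemma~\ref{lem-findpoly} to bounded-degree curves with two degrees of freedom: it gives a low-degree hypersurface containing the points and the rich curves, nothing more. It does not extend Theorem~\ref{thm-manyrichlines} (equivalently Theorem~\ref{thm-manyrichlines-equiv}), because the final step of that proof is specific to lines: at a ``flat'' point $v$ all lines of the family through $v$ lie in a hyperplane $H$, and, being lines, they are \emph{contained} in $H$, which is what places $\approx (r_0-1)k_0$ points inside $H$. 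For a curve, having its tangent direction at $v$ inside $H$ says nothing about the rest of the curve, so no point-rich hyperplane is produced. Moreover the statement you need is simply not true: take $A=\{1,\ldots,N\}$, so $V=A\times A$ is the $N\times N$ grid, which has $\gtrsim N^4/r^3$ $r$-rich lines for $2\le r\le N$; these lift under $\nu_2$ to as many $r$-rich conics in $\tilde V=\nu_2(V)$, while every hyperplane of $\C^5$ pulls back to a plane conic, so $s_4(\tilde V)\le 2N$ exactly as in your B\'ezout argument. For $1\ll r\ll N$ one has $N^4/r^3\gg N^4/r^5+N^3/r^2$, contradicting your key inequality. (Your earlier steps---the pigeonhole giving $|\cL_r(A\times A)|\ge |T_C|N/2$ with $r=C\sqrt{N\log N}/2$, and $s_4\le 2N$---are fine; the collapse happens here.)

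For comparison, the paper gets the corollary in one line from Theorem~\ref{thm-newsumproduct}: if $|A+A\cdot A|\le N^{1.5}/(C\sqrt{\log N})$ then $A\subset T_C$ (since $A+tA\subseteq A+A\cdot A$ for $t\in A$), so $N\le |T_C|\lesssim N/C^2$, impossible for a large constant $C$. Theorem~\ref{thm-newsumproduct} itself is proved not in the plane but with a truly high-dimensional configuration $V=\bigcup_{t\in Q}\{t\}\times (A+tA)^{d-1}$ with $d=\lceil 100\log N\rceil$, to which only Lemma~\ref{lem-findpoly} is applied, followed by a Schwartz--Zippel argument (Lemma~\ref{lem-SZ-hom}) on the top homogeneous part of the vanishing polynomial; the choice $d\approx \log N$ is exactly what makes the threshold $n/r^{d-2}$ beat the planar counts. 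Alternatively (Section~\ref{sec-newsumproduct-usingST}) one can argue via planar Szemer\'edi--Trotter, but with the point set $\bigcup_{t\in T_C}\{t\}\times(A+tA)$ and the lines $\{(z,a+za')\}_{z\in\C}$, not with $A\times A$: with your configuration, Theorem~\ref{thm-r-richlines-2dimensions} only yields $|T_C|\lesssim N^{1.5}/(C^3(\log N)^{1.5})$, which is too weak---and that is precisely why your plan was forced onto the unavailable conic bound.
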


\subsection{Organization}

In Section~\ref{sec-prelim} we give some preliminaries, including on design matrices and the Veronese embedding. In Section~\ref{sec-manyrichlines} we prove Theorem~\ref{thm-manyrichlines}. In Section~\ref{sec-apbound} we prove Theorem~\ref{thm-apbound}. In Section~\ref{sec-newsumproduct} we prove Theorem~\ref{thm-newsumproduct}. In Appendix~\ref{sec-stronger-thm1} we give a possible strengthening of Conjecture~\ref{conj-rich} along with the proofs of Theorem~\ref{thm-conj-d=3} and Theorem~\ref{thm-conj-d=4}.

\subsection{Acknowledgements}
We thank Ben Green and Noam Solomon for helpful comments.  Research supported by NSF grant CCF-1217416 and by the Sloan fellowship. Some of the work on the paper was carried out during the special semester on `Algebraic Techniques for Combinatorial and Computational Geometry', held at the Institute for Pure and Applied Mathematics (IPAM) during Spring 2014.

\section{Preliminaries}\label{sec-prelim}

We begin with some notations.  For a vector $v \in \C^n$ and a set $I \subset [n]$ we denote by $v_I \subset \C^I$ the restriction of $v$ to indices in $I$. We denote the {\em support} of a vector $v \in \C^d$ by $\supp(v) = \{ i \in [d]\,|\, v_i \neq 0 \}$ (this notation is extended to matrices as well). For a set of $n$ points $V \subset \C^d$ and an integer $\ell$, we denote by $V^\ell \subset \C^{d\ell}$ its $\ell$-fold Cartesian product i.e. $V^\ell=V\times V\times \cdots \times V\ (\ell\ \mathrm{times})$ where we naturally identify $\C^d\times \C^d\times \cdots \times \C^d\ (\ell\ \mathrm{times})$ with $\C^{d\ell}$.

\subsection{Design matrices}
\label{sec-design-matrices}

Design matrices, defined in \cite{BDWY12}, are matrices that satisfy a certain condition on their support. 

\begin{define}[Design matrix]\label{def-designmatrix}
Let $A$ be an $m \times n$ matrix over a field $\F$. Let $R_1,\ldots,R_m \in \F^n$ be  the rows of $A$ and let $C_1,\ldots,C_n \in \F^m$ be the columns of $A$. We say that $A$ is a {\em $(q,k,t)$-design matrix} if
\begin{enumerate}
\item For all $i \in [m]$, $|\supp(R_i)| \leq q$.
\item For all $j \in [n]$, $|\supp(C_j)| \geq k$.
\item For all $j_1 \neq j_2 \in [n]$, $|\supp(C_{j_1}) \cap \supp(C_{j_2}) | \leq t$.
\end{enumerate}
\end{define}

Surprisingly, one can derive a general bound on the rank of complex design matrices, despite having no information on the values present at the non zero positions of the matrix. The first bound of this form was given in \cite{BDWY12} which was improved in \cite{DSW12}.

\begin{thm}[\cite{DSW12}]\label{thm-rankbound}
Let $A$ be an $m \times n$ matrix with entries in $\C$. If $A$ is a $(q,k,t)$ design matrix then the following two bounds hold
\begin{eqnarray}
\rank(A) \geq n - \frac{ntq^2}{k}.\label{eq-rank1}\\
\rank(A) \geq n - \frac{mtq^2}{k^2}.\label{eq-rank2}
\end{eqnarray}

\end{thm}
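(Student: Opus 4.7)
The plan is to bound $\rank(A)$ from below by studying the Gram matrix of a well-rescaled version of $A$ and exploiting the support pattern encoded by the $(q,k,t)$-design conditions. Since multiplying $A$ on the left and right by invertible diagonal matrices does not change the rank, I would look for positive diagonal scalings $D_1,D_2$ so that $B=D_1AD_2$ is in a convenient normalized form, then argue spectrally on $M=B^*B$.

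The first step is matrix scaling. My plan is to invoke a Sinkhorn-type argument applied to the non-negative matrix whose entries are $|A_{\ell i}|^2$. Existence of a scaling that simultaneously forces the columns of $B$ to be unit vectors (so that $M_{ii}=1$) while keeping the rows of $B$ balanced (so that each $\|R_\ell\|_2^2$ is controlled by, roughly, $n/m$, with per-entry magnitudes controlled by the column/row support counts) should follow from the density of the support, i.e.\ from the three design inequalities. This step is where the parameters $q$ and $k$ enter the analysis, as they govern how much mass each row and column can hold per nonzero position.

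Next I would estimate the off-diagonal entries of $M$. For $i\neq j$, the entry $M_{ij}=\langle C_i,C_j\rangle$ is supported on $\supp(C_i)\cap\supp(C_j)$, which has at most $t$ elements by the third design condition. Cauchy--Schwarz gives
\[
|M_{ij}|^2 \;\le\; t\sum_{\ell}|B_{\ell i}|^2|B_{\ell j}|^2.
\]
Summing over $j\neq i$ and using the row sparsity $|\supp(R_\ell)|\le q$ together with the row-norm bound from the scaling step produces $\sum_{j\neq i}|M_{ij}|^2 \le tq^2/k$ in one accounting (column-based, yielding the first inequality) and a bound with $m/k^2$ in place of $n/k$ in another accounting (row-based, yielding the second inequality). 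Summing over $i$ then gives the two corresponding upper bounds on $\|M-I\|_F^2=\sum_{i\neq j}|M_{ij}|^2$, namely $ntq^2/k$ and $mtq^2/k^2$.

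To convert this into a rank bound, I would use that $M$ is PSD with $\tr(M)=n$, so the Cauchy--Schwarz inequality applied to the eigenvalues gives
\[
\rank(A)=\rank(M)\;\ge\;\frac{(\tr M)^2}{\tr M^2}\;=\;\frac{n^2}{n+\sum_{i\neq j}|M_{ij}|^2}\;\ge\; n-\sum_{i\neq j}|M_{ij}|^2,
\]
and inserting the two bounds from the previous step yields the two displayed inequalities. I expect the matrix-scaling step to be the main obstacle: one has to exhibit an explicit scaling whose row and column balance depends on $m,n,q,k$ in exactly the way needed for the Cauchy--Schwarz step to deliver the claimed $q^2/k$ and $q^2/k^2$ savings, rather than weaker bounds that are trivial. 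Once the scaling is set up correctly, the remaining estimates are straightforward Cauchy--Schwarz plus trace arithmetic.
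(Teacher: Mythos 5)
A preliminary remark: the paper does not prove Theorem~\ref{thm-rankbound} at all --- it imports it as a black box from \cite{DSW12} (building on \cite{BDWY12}). So the comparison is with the proof in that reference, and your outline does mirror it: rescale $A$ by diagonal matrices, pass to the Gram matrix $M=B^*B$, bound the off-diagonal entries via Cauchy--Schwarz using the intersection bound $t$ and the row sparsity $q$, and finish with $\rank(M)\ge (\tr M)^2/\tr(M^2)=n^2/(n+\sum_{i\ne j}|M_{ij}|^2)\ge n-\sum_{i\ne j}|M_{ij}|^2$. Those last steps are correct exactly as you state them, and the two accountings (per column versus per row) are indeed how the two displayed bounds arise.

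The genuine gap is the scaling step, and it is not only that you left it unproved --- the normalization you propose is the wrong one. You ask for unit columns together with rows balanced to $\|R_\ell\|_2^2\approx n/m$. For a general $(q,k,t)$-design matrix such a doubly balanced scaling need not exist (nothing in the design conditions rules out rows of wildly different weight or even empty rows, and the Sinkhorn/Rothblum--Schneider scalability condition for uniform row sums can fail), and even when it exists it does not deliver the theorem: with $\|R_\ell\|_2^2\approx n/m$ your column-based computation gives $\sum_{j\ne i}|M_{ij}|^2\le t\sum_\ell |B_{\ell i}|^2\,\|R_\ell\|_2^2\approx tn/m$, hence $\sum_{i\ne j}|M_{ij}|^2\lesssim tn^2/m$, a bound in which $k$ never appears. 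What the argument actually needs --- and what \cite{BDWY12,DSW12} prove by applying the matrix-scaling theorem to the squared-entry matrix, using precisely the hypothesis that every column has at least $k$ nonzero entries --- is a scaling in which every column of $B$ has norm $1$ while every single entry satisfies $|B_{\ell i}|\le (1+\eps)/\sqrt{k}$. From that, row sparsity gives $\|R_\ell\|_2^2\le q(1+\eps)^2/k$, and your two accountings really do produce $\sum_{i\ne j}|M_{ij}|^2\le ntq^2/k$ (column-based, using $\sum_\ell\|R_\ell\|_2^2=n$) and $\le mtq^2/k^2$ (row-based), after letting $\eps\to 0$ (the scaling theorem only yields approximate scalings, so a small limiting argument is also required). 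In short, the skeleton matches the cited proof, but the step you yourself flagged as the main obstacle is where the proof actually lives, and as you formulated it the scaling target would neither exist in general nor yield the required $q^2/k$ and $q^2/k^2$ savings.
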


\subsection{The Veronese embedding}
\label{sec-veronese-embedding}

We denote by $$\m(d,r) = \binom{d+r}{d}$$ the number of monomials of degree at most $r$ in $d$ variables. We will often use the lower bound $\m(d,r) \geq (r/d)^d$.
 The Veronese embedding $\phi_{d,r} : \C^d \mapsto \C^{\m(d,r)}$ sends a point $a = (a_1,\ldots,a_d) \in \C^d$ to the vector of evaluations of all monomials of degree at most $r$ at the point $a$. For example, the map $\phi_{2,2}$ sends $(a_1,a_2)$ to $(1,a_1,a_2,a_1^2,a_1a_2,a_2^2)$. 
 We can identify each point $w \in \C^{\m(d,r)}$ with a polynomial $f_w \in \C[x_1,\ldots,x_d]$ of degree at most $r$ in an obvious manner so that the value $f_w(a)$ at a point $a \in \C^d$ is given by the standard inner product $\ip{w}{\phi_{d,r}(a)}$. We will use the following two easy claims.

\begin{claim}\label{cla-veronese-vanish}
Let  $V \subset \C^d$ and let $U = \phi_{d,r}(V) \subset \C^{\m(d,r)}$. Then  $U$ is contained in a hyperplane iff there is a non-zero polynomial $f \in \C[x_1,\ldots,x_d]$ of degree at most $r$ that vanishes on all points of $V$.
\end{claim}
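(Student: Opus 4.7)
The plan is to exploit the identification, already indicated in the preceding paragraph, between a vector $w \in \C^{\m(d,r)}$ and the polynomial $f_w \in \C[x_1,\ldots,x_d]$ of degree at most $r$ whose coefficient on the monomial $x^\alpha$ equals the $\alpha$-th coordinate of $w$. Under this identification, the defining property of the Veronese embedding is the evaluation identity $f_w(a) = \ip{w}{\phi_{d,r}(a)}$ for every $a \in \C^d$. The map $w \mapsto f_w$ is a bijection between $\C^{\m(d,r)}$ and the space of polynomials of degree at most $r$, and it sends the zero vector to the zero polynomial.

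For the ($\Leftarrow$) direction I would take the nonzero polynomial $f$ of degree at most $r$ vanishing on $V$, let $w$ be its coefficient vector (which is nonzero because $f$ is nonzero), and note that for every $a \in V$ we have $\ip{w}{\phi_{d,r}(a)} = f(a) = 0$. Hence every point of $U = \phi_{d,r}(V)$ lies on the linear hyperplane $\{y \in \C^{\m(d,r)} : \ip{w}{y} = 0\}$.

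For the ($\Rightarrow$) direction I would assume $U$ lies in a hyperplane, described as $\{y : \ip{w}{y} = c\}$ for some nonzero $w$ and some scalar $c \in \C$ (the affine case; the linear case is $c=0$). Since one coordinate of $\phi_{d,r}(a)$ is always $1$ (the evaluation of the constant monomial $x^{\mathbf{0}}$), I would subtract $c$ from that coordinate of $w$ to obtain a vector $w'$. The resulting $w'$ still satisfies $\ip{w'}{\phi_{d,r}(a)} = \ip{w}{\phi_{d,r}(a)} - c = 0$ for every $a \in V$, and $w'$ is nonzero since the non-constant coordinates of $w$ are unchanged and at least one coordinate of $w$ must be nonzero (either a non-constant coordinate, or the constant coordinate, in which case $w'$ differs from the original constant coordinate by $-c$, and we can argue either $w' \neq 0$ directly or, if not, that $c = 0$ forces a different nonzero coordinate of $w$ to survive). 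Then $f = f_{w'}$ is a nonzero polynomial of degree at most $r$ vanishing on $V$.

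There is no real obstacle here; the only place to be careful is the affine-versus-linear hyperplane distinction, which is handled cleanly by the fact that the constant monomial $1$ appears as a coordinate of the Veronese embedding and thus lets any affine linear relation on $\phi_{d,r}(V)$ be rewritten as a linear one.
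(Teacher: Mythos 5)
For the claim as the paper intends it --- and as its use in Lemma~\ref{lem-findpoly} requires --- ``hyperplane'' means a \emph{linear} hyperplane through the origin, i.e.\ a set of the form $\{y \in \C^{\m(d,r)} : \ip{w}{y} = 0\}$ with $w \neq 0$ (in the lemma the hyperplane arises from a kernel vector of the matrix $M$). With that reading, your argument is exactly the paper's: the bijection $w \leftrightarrow f_w$ together with the evaluation identity $f_w(a) = \ip{w}{\phi_{d,r}(a)}$ gives both directions at once, so your proof is correct and essentially identical to the paper's.

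The place where your write-up goes astray is the extra affine case, and the flaw there is not patchable. If the affine hyperplane is $\{y : \ip{w}{y} = c\}$ and your vector $w'$ (obtained by subtracting $c$ from the constant coordinate) vanishes, then necessarily every non-constant coordinate of $w$ is zero and the constant coordinate equals $c \neq 0$ (not $c = 0$, as your parenthetical suggests); the hyperplane is then $\{y : y_{\mathbf 0} = 1\}$, where $y_{\mathbf 0}$ is the coordinate of the constant monomial. That hyperplane contains the image of \emph{every} point of $\C^d$ under $\phi_{d,r}$, so $U$ lies in it no matter what $V$ is, while a generic $V$ of size at least $\m(d,r)$ admits no non-zero vanishing polynomial of degree at most $r$. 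Hence the ``only if'' direction is genuinely false for affine hyperplanes, and no choice of $w'$ can rescue it; the statement should simply be read (as the paper's proof does) with hyperplanes through the origin, where your argument stands as is.
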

\begin{proof}
Each hyperplane in $\C^{\m(d,r)}$ is given as the set of points having inner product zero with some  $w \in \C^{\m(d,r)}$. If we take the corresponding polynomial $f_w \in \C[x_1,\ldots,x_d]$ we get that it vanishes on $V$ iff $\phi_{d,r}(V)$ is contained in the hyperplane defined by $w$. 
\end{proof}

\begin{claim}\label{cla-veronese-line}
Suppose the  $r+2$ points $v_1,\ldots,v_{r+2} \in \C^d$ are collinear and let $\phi = \phi_{d,r} : \C^d \mapsto \C^{\m(d,r)}$. Then, the points $\phi(v_1),\ldots,\phi(v_{r+2})$ are linearly dependent. Moreover, every $r+1$ of the points $\phi(v_1),\ldots,\phi(v_{r+2})$ are linearly independent.
\end{claim}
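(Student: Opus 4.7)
The plan is to parametrize the common line and reduce both statements to facts about univariate polynomials of degree at most $r$. Since $v_1,\ldots,v_{r+2}$ are collinear, we can write $v_i = a + t_i b$ for some $a,b \in \C^d$ with $b \neq 0$ and distinct scalars $t_1,\ldots,t_{r+2} \in \C$ (distinctness is forced because the $v_i$ are distinct points).

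For the first part, the key observation is that for any monomial $x^\alpha$ with $|\alpha| \leq r$, the evaluation $(a + tb)^\alpha$ is a polynomial in $t$ of degree at most $|\alpha| \leq r$. Expanding coordinatewise, this means that for each $\alpha$ we can write $\phi(v_i)_\alpha = \sum_{j=0}^{r} c_{\alpha,j}\, t_i^{\,j}$ for some constants $c_{\alpha,j}$ depending only on $a,b$. Thus
$$\phi(v_i) = \sum_{j=0}^{r} t_i^{\,j}\, w_j$$
for fixed vectors $w_0,\ldots,w_r \in \C^{\m(d,r)}$, so $\phi(v_1),\ldots,\phi(v_{r+2})$ all lie in a subspace of dimension at most $r+1$, and any $r+2$ of them must be linearly dependent.

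For the second part, suppose for contradiction that $r+1$ of the images, say $\phi(v_{i_1}),\ldots,\phi(v_{i_{r+1}})$, satisfy a nontrivial linear relation $\sum_{s=1}^{r+1} \lambda_s\, \phi(v_{i_s}) = 0$. Taking inner products with coefficient vectors, this is equivalent to saying that $\sum_s \lambda_s f(v_{i_s}) = 0$ for every polynomial $f \in \C[x_1,\ldots,x_d]$ of degree at most $r$. Since $b \neq 0$, we can choose a linear form $\ell$ on $\C^d$ with $\ell(b) \neq 0$, and set $\mu(x) = (\ell(x) - \ell(a))/\ell(b)$, which is affine and satisfies $\mu(v_i) = t_i$. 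Applying the relation to $f = \mu^k$ (a polynomial of degree $k \leq r$) for $k = 0,1,\ldots,r$ yields
$$\sum_{s=1}^{r+1} \lambda_s\, t_{i_s}^{\,k} = 0 \qquad (k = 0,1,\ldots,r).$$
This is a Vandermonde system in the $r+1$ pairwise distinct values $t_{i_1},\ldots,t_{i_{r+1}}$, whose only solution is $\lambda_1 = \cdots = \lambda_{r+1} = 0$, contradicting the nontriviality of the relation.

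The argument is essentially a dimension count plus Vandermonde; the only mildly delicate step is producing the polynomial $\mu$ that recovers the line parameter from a point, which is why it is useful to have the freedom to pick a linear form not vanishing on the direction $b$. Everything else is just the remark that the Veronese image of a line is a rational normal curve of degree $r$, whose points are in general position in its affine span.
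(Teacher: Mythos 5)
Your proof is correct, and it takes a somewhat different (more direct) route than the paper's. For the dependence of all $r+2$ images, you argue primally: parametrizing the line as $a+tb$, you observe that each coordinate of $\phi(a+tb)$ is a univariate polynomial of degree at most $r$ in $t$, so the whole Veronese image of the line lies in the span of $r+1$ fixed vectors $w_0,\ldots,w_r$, and any $r+2$ points of it are dependent; the paper instead argues dually, showing that any $w\in\C^{\m(d,r)}$ orthogonal to $\phi(v_1),\ldots,\phi(v_{r+1})$ corresponds to a degree-$\le r$ polynomial $f_w$ vanishing at $r+1$ points of the line, hence on the whole line, hence at $v_{r+2}$. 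For the ``moreover'' part, both arguments bottom out in the invertibility of a Vandermonde matrix in the distinct line parameters, but you test a putative relation $\sum_s\lambda_s\phi(v_{i_s})=0$ against the polynomials $\mu^k$, $k=0,\ldots,r$ (where $\mu$ is an affine function recovering the parameter $t$), obtaining the Vandermonde system directly, whereas the paper interpolates a single degree-$r$ polynomial vanishing at $r$ of the points and equal to $1$ at the remaining one and derives a contradiction. Your version has the advantage of making the underlying geometry explicit (the image is a rational normal curve inside an $(r+1)$-dimensional subspace, so the bound $r+2$ is visibly sharp) and of handling an arbitrary $(r+1)$-subset without the paper's implicit relabeling; the paper's dual formulation is slightly more economical in notation and is the form it reuses later when extracting the coefficients of the dependency for the design matrix. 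One small point you handled correctly and the paper leaves implicit: the claim needs the $v_i$ to be distinct (otherwise the ``moreover'' part fails), and your remark that distinct points force distinct parameters $t_i$ is exactly where this enters.
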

\begin{proof}
Denote $u_i = \phi(v_i)$ for $i=1\ldots r+2$. To show that the $u_i$'s are linearly dependent it is enough to show that, for any $w \in \C^{\m(d,r)}$, if all the $r+1$ inner products $\ip{w}{u_1}, \ldots, \ip{w}{u_{r+1}}$ are zero, then the inner product $\ip{w}{u_{r+2}}$ must also be zero. Suppose this is the case, and let $f_w \in \C[x_1,\ldots,x_d]$ be the polynomial of degree at most $r$ associated with the point $w$ so that $\ip{w}{u_i} = f_w(v_i)$ for all $1 \leq i \leq r+1$. Since the points $v_1,\ldots,v_{r+2}$ are on a single line $L \subset \C^d$, and since the polynomial $f_w$ vanishes on $r+1$ of them, we have that $f_w$ must vanish identically on the line $L$ and so $f_w(v_{r+2})= \ip{w}{u_{r+2}} =0$ as well. 

To show the `moreover' part, suppose in contradiction that $u_1,\ldots,u_r$ span $u_{r+1}$. We can find, by interpolation, a non zero polynomial $f \in \C[x_1,\ldots,x_d]$ of degree at most $r$ such that $f(v_1)= \ldots = f(v_r) = 0$ and $f(v_{r+1})=1$. More formally, we can translate the line containing the $r+1$ points to the $x_1$-axis and then interpolate a degree $r$ polynomial in $x_1$ with the required properties using the invertibility of the Vandermonde matrix.  Now, let $w \in \C^{\m(d,r)}$ be the point such that $f = f_w$. We  know that $\ip{w}{u_i}=0$ for $i=1\ldots r$ and thus, since $u_{r+1}$ is in the span of $u_1,\ldots,u_r$, we get that   $f(v_{r+1}) = \ip{w}{u_{r+1}}=0$ in contradiction. This completes the proof.
\end{proof}

\subsection{Polynomials vanishing on grids}

We recall the Schwartz-Zippel lemma. 

\begin{lem}[\cite{Schwartz80,Zippel79}]\label{lem-SZ}
Let $S \subset \F$ be a finite subset of an arbitrary field $\F$ and let $f \in \F[x_1,\ldots,x_d]$ be a non-zero polynomial of degree at most $r$. Then
$$   \lvert \{ (a_1,\ldots,a_d) \in S^d \subset \F^d \,\,|\,\, f(a_1,\ldots,a_d)=0 \}\rvert \leq r \cdot |S|^{d-1}. $$
\end{lem}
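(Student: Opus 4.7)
The plan is to prove this by induction on the number of variables $d$. The base case $d=1$ is the standard fact that a non-zero univariate polynomial of degree at most $r$ over any field has at most $r$ roots, which follows from the division algorithm in $\F[x_1]$.

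For the inductive step, I would write $f$ as a polynomial in $x_d$ whose coefficients are polynomials in the remaining variables:
\[
f(x_1,\ldots,x_d) = \sum_{i=0}^{k} x_d^i \cdot g_i(x_1,\ldots,x_{d-1}),
\]
where $k \leq r$ is the largest exponent of $x_d$ that appears with a non-zero coefficient, so that $g_k$ is a non-zero polynomial in $d-1$ variables of total degree at most $r-k$.

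Now I would split the count of zeros of $f$ in $S^d$ according to whether the leading coefficient $g_k$ vanishes on the projection onto the first $d-1$ coordinates. If $g_k(a_1,\ldots,a_{d-1}) = 0$, then $(a_1,\ldots,a_{d-1})$ is one of at most $(r-k)|S|^{d-2}$ tuples by the inductive hypothesis applied to $g_k$, and each such tuple contributes at most $|S|$ zeros of $f$, giving at most $(r-k)|S|^{d-1}$ zeros. Otherwise $g_k(a_1,\ldots,a_{d-1}) \neq 0$, in which case $f(a_1,\ldots,a_{d-1}, x_d)$ is a non-zero univariate polynomial in $x_d$ of degree exactly $k$, so it has at most $k$ roots in $S$; summing over the at most $|S|^{d-1}$ such tuples gives at most $k |S|^{d-1}$ additional zeros.

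Adding the two contributions yields $(r-k)|S|^{d-1} + k|S|^{d-1} = r|S|^{d-1}$, which closes the induction. There is no real obstacle here; the only subtle point is to choose $k$ to be the highest power of $x_d$ appearing in $f$ so that the leading coefficient $g_k$ is genuinely non-zero, which is what lets us invoke the inductive hypothesis on a polynomial of strictly fewer variables.
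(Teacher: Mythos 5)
Your proof is correct: this is the standard induction-on-the-number-of-variables argument for the Schwartz--Zippel lemma, and the split according to whether the leading coefficient $g_k$ (in the variable $x_d$) vanishes, giving $(r-k)|S|^{d-1} + k|S|^{d-1} = r|S|^{d-1}$, is exactly right, including the key point of choosing $k$ maximal so that $g_k$ is a non-zero polynomial of degree at most $r-k$ in $d-1$ variables. Note that the paper does not prove this lemma at all --- it is quoted from \cite{Schwartz80,Zippel79} --- so there is nothing internal to compare against; your argument is the classical one and would serve as a self-contained proof.
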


An easy corollary is the following claim about homogeneous polynomials.

\begin{lem}\label{lem-SZ-hom}
Let $S \subset \F$ be a finite subset of an arbitrary field $\F$ and let $f \in \F[x_1,\ldots,x_d]$ be a non-zero homogeneous polynomial of degree at most $r$. Then
$$   \lvert \{ (1,a_2,\ldots,a_d) \in \{1\} \times S^{d-1}  \,\,|\,\, f(1,a_2,\ldots,a_d)=0 \}\rvert \leq r \cdot |S|^{d-2}. $$
\end{lem}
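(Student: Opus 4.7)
The plan is to reduce to the standard Schwartz--Zippel lemma (Lemma~\ref{lem-SZ}) by dehomogenizing $f$. Define
\[
g(x_2,\ldots,x_d) \;=\; f(1,x_2,\ldots,x_d) \;\in\; \F[x_2,\ldots,x_d],
\]
so that $\deg(g)\leq \deg(f)\leq r$. The set whose size we want to bound is exactly $\{(a_2,\ldots,a_d)\in S^{d-1} : g(a_2,\ldots,a_d)=0\}$, embedded into $\{1\}\times S^{d-1}$ by prepending a $1$. If we can show that $g$ is a non-zero polynomial, then Lemma~\ref{lem-SZ} applied to $g$, $S$, and the ambient space $\F^{d-1}$ immediately gives the bound $r\cdot |S|^{d-2}$.

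The only real content is therefore the claim that $g\not\equiv 0$, and this is where the homogeneity hypothesis is used. Write $f$ as a sum of monomials of the same total degree $s\leq r$:
\[
f(x_1,\ldots,x_d) \;=\; \sum_{\alpha_1+\cdots+\alpha_d = s} c_\alpha\, x_1^{\alpha_1}\cdots x_d^{\alpha_d}.
\]
Setting $x_1=1$ produces
\[
g(x_2,\ldots,x_d) \;=\; \sum_{\alpha_1+\cdots+\alpha_d = s} c_\alpha\, x_2^{\alpha_2}\cdots x_d^{\alpha_d}.
\]
Because the total degree of every surviving monomial in $f$ is fixed at $s$, the exponent $\alpha_1=s-(\alpha_2+\cdots+\alpha_d)$ is determined by the remaining exponents. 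Hence distinct monomials of $f$ map to distinct monomials of $g$ with the same coefficients, so $g\equiv 0$ would force $c_\alpha=0$ for every $\alpha$, contradicting $f\not\equiv 0$.

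Combining these two steps completes the proof: $g$ is a non-zero polynomial in $d-1$ variables of degree at most $r$, so Lemma~\ref{lem-SZ} bounds its zero set in $S^{d-1}$ by $r|S|^{d-2}$, which is the desired inequality. There is no real obstacle here; the only point worth being careful about is recognizing that ``homogeneous of degree at most $r$'' means $f$ is homogeneous of some fixed degree $s\leq r$, so that dehomogenization by fixing $x_1=1$ preserves non-vanishing (this step would fail for a general, non-homogeneous $f$ if it happened to be divisible by $x_1$).
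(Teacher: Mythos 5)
Your proof is correct and follows essentially the same route as the paper: dehomogenize by setting $x_1=1$ and apply Lemma~\ref{lem-SZ} to the resulting polynomial in $d-1$ variables. The only (minor) difference is how non-vanishing of $g$ is justified --- you track monomials directly via the injection $\alpha \mapsto (\alpha_2,\ldots,\alpha_d)$, while the paper notes that $g\equiv 0$ would force $1-x_1$ to divide $f$, which is impossible for a non-zero homogeneous polynomial; both arguments are valid.
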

\begin{proof}
Let $g(x_2,\ldots,x_d) = f(1,x_2,\ldots,x_d)$ be the polynomial one obtains from fixing $x_1=1$ in $f$. Then $g$ is a polynomial of degree at most $r$ in $d-1$ variables. If $g$ was the zero polynomial then $f$ would have been divisible by $1-x_1$ which is impossible for a homogeneous polynomial. Hence, we can use Lemma~\ref{lem-SZ} to bound the number of zeros of $g$ in the set $S^{d-1}$ by $r \cdot |S|^{d-2}$. This completes the proof.
\end{proof}
 
 Another useful claim says that if a degree one polynomial (i.e., the equation of a hyperplane) vanishes on a large subset of the product set $V^\ell$, then there is another degree one polynomial that vanishes on a large subset of $V$.
\begin{lem}\label{lem-hyperplaneproduct}
Let $V \subset \C^d$ be a set of $n$ points and let $V^\ell \subset \C^{d\ell}$ be its $\ell$-fold Cartesian product. Let $H \subset \C^{d\ell}$ be an affine hyperplane such that $|H \cap V^\ell| \geq \delta \cdot n^\ell$. Then, there exists an affine hyperplane $H' \subset \C^d$ such that $|H' \cap V| \geq \delta \cdot n$.
\end{lem}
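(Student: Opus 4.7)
The plan is to use a straightforward averaging (pigeonhole) argument. Any affine hyperplane $H \subset \C^{d\ell}$ is the zero set of a non-trivial affine-linear equation; writing a point of $\C^{d\ell}$ as a tuple $(x_1,\ldots,x_\ell)$ with each $x_i \in \C^d$, we can express $H$ as
\[
H = \left\{(x_1,\ldots,x_\ell) : \sum_{i=1}^\ell \ip{a_i}{x_i} = c\right\}
\]
for some $a_1,\ldots,a_\ell \in \C^d$ (not all zero) and some $c \in \C$. After relabeling coordinates I may assume $a_1 \neq 0$, so that $\ip{a_1}{x_1} = \beta$ cuts out an affine hyperplane in $\C^d$ for every value of $\beta \in \C$.

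Next, for each fixed tuple $(v_2,\ldots,v_\ell) \in V^{\ell-1}$, let
\[
H'_{v_2,\ldots,v_\ell} = \left\{ x_1 \in \C^d : \ip{a_1}{x_1} = c - \sum_{i=2}^\ell \ip{a_i}{v_i}\right\}.
\]
This is an affine hyperplane in $\C^d$ (since $a_1 \neq 0$), and by construction a point $(v_1,v_2,\ldots,v_\ell) \in V^\ell$ lies in $H$ if and only if $v_1 \in H'_{v_2,\ldots,v_\ell} \cap V$. Therefore
\[
\sum_{(v_2,\ldots,v_\ell)\in V^{\ell-1}} \bigl| H'_{v_2,\ldots,v_\ell} \cap V\bigr| \;=\; |H \cap V^\ell| \;\geq\; \delta\, n^\ell.
\]

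Finally, I apply pigeonhole: since there are exactly $n^{\ell-1}$ summands on the left, at least one tuple $(v_2^*,\ldots,v_\ell^*)\in V^{\ell-1}$ satisfies $|H'_{v_2^*,\ldots,v_\ell^*}\cap V| \geq \delta n$, and $H' := H'_{v_2^*,\ldots,v_\ell^*}$ is the desired hyperplane. There is no real obstacle here; the only minor care needed is to separate out the case where some $a_i$ in the defining equation of $H$ is zero (in which case the averaging argument just needs to be applied along any coordinate block where the linear functional is non-trivial), which is handled by the initial relabeling.
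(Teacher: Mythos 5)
Your proof is correct and follows essentially the same route as the paper: write the defining functional of $H$ in blocks, assume without loss of generality that the first block is non-trivial, fix the remaining $\ell-1$ coordinates, and average over the $n^{\ell-1}$ choices to find a fiber meeting $H$ in at least $\delta n$ points, which yields the hyperplane $H'$ in $\C^d$. No issues.
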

\begin{proof}
Let $h \in \C^{d\ell}$ be the vector perpendicular to $H$ so that $x \in H$ iff $\ip{x}{h}=b$ for some $b \in \C$. Observing the product structure of $\C^{d\ell} = (\C^d)^\ell$ we can write $h = (h_1,\ldots,h_\ell)$ with each $h_i \in \C^d$. W.l.o.g suppose that $h_1 \neq 0$. For each $a = (a_2,\ldots,a_\ell) \in V^{\ell-1}$ let $V^\ell_a = V \times \{a_2\} \times \ldots \{a_\ell\}$. Since there are $n^{\ell-1}$ different choices for $a \in V^{\ell-1}$, and since 
$$ |V^\ell \cap H| = \sum_{a \in V^{\ell-1}}|V^\ell_a \cap H|, $$
 there must be some $a$ with $|V^\ell_a \cap H| \geq \delta \cdot n$. Let $H' \subset \C^d$ be the hyperplane  defined by the equation 
$$ x \in H' \,\,\text{iff}\,\, \ip{x}{h_1}+ \ip{a_2}{h_2} + \ldots + \ip{a_\ell}{h_\ell} = b. $$ Then, $H' \cap V$ is in one-to-one correspondence with the set $V^\ell_a \cap H$ and so has the same size.
 \end{proof}

\subsection{A graph refinement lemma}
We will need the following simple lemma, showing that any bipartite graph can be refined so that both vertex sets have high minimum degree  (relative the to the original edge density).
\begin{lem}\label{lem-refine}
Let $G=(A\sqcup B, E)$ be a bipartite graph with  $E \subset A \times B$ and edge set $E\ne \phi$. Then there exists non-empty sets $A'\subset A$ and $B'\subset B$ such that if we consider the induced subgraph $G'=(A'\sqcup B',E')$ then 
\begin{enumerate}
	\item The minimum degree in $A'$ is at least $\frac{|E|}{4|A|}$
	\item The minimum degree in $B'$ is at least $\frac{|E|}{4|B|}$
	\item $|E'|\ge |E|/2$.
\end{enumerate}
\end{lem}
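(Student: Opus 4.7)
The plan is a standard iterative peeling argument: I would repeatedly delete any vertex that currently violates a fixed degree threshold, and then check that not too many edges can have been lost by the time the process halts.

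Concretely, set the fixed thresholds $\tau_A = |E|/(4|A|)$ and $\tau_B = |E|/(4|B|)$, and initialize with $G_0 = G$. At each step, as long as there is some vertex $a$ in the current $A$-side with degree $< \tau_A$ in the current graph, or some vertex $b$ in the current $B$-side with degree $< \tau_B$, I delete one such vertex together with its incident edges. Since every step strictly decreases the vertex count, this terminates with an induced subgraph $G' = (A' \sqcup B', E')$. By the halting condition, every remaining vertex in $A'$ has degree at least $\tau_A$ and every remaining vertex in $B'$ has degree at least $\tau_B$, which is exactly items (1) and (2) of the lemma.

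For item (3), the observation is that whenever a vertex is removed it contributes strictly fewer than its threshold many edges to the deleted set, and each deleted edge is charged to the unique vertex whose removal eliminated it. At most $|A|$ vertices can ever be peeled from the $A$-side and at most $|B|$ from the $B$-side, so the total number of removed edges is strictly less than $|A|\tau_A + |B|\tau_B = |E|/4 + |E|/4 = |E|/2$. Hence $|E'| > |E|/2$, which gives (3), and since $|E| \geq 1$ we have $|E'| \geq 1$ as an integer, forcing $A'$ and $B'$ to be nonempty.

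There is no real obstacle here; the only subtlety worth flagging is that the thresholds $\tau_A, \tau_B$ must be pinned to the original quantities $|E|, |A|, |B|$ throughout the peeling rather than recomputed after each deletion, so that the edge-deletion count telescopes cleanly into $|A|\tau_A + |B|\tau_B$ and does not blow up as $|E|$ shrinks.
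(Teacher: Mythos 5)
Your proposal is correct and follows essentially the same argument as the paper: iteratively peel vertices below the fixed thresholds $|E|/(4|A|)$ and $|E|/(4|B|)$, then charge each lost edge to the removed vertex, bounding the total loss by $|A|\cdot\frac{|E|}{4|A|}+|B|\cdot\frac{|E|}{4|B|}=|E|/2$. Your extra remarks (keeping the thresholds fixed at the original $|E|,|A|,|B|$, and deducing nonemptiness of $A',B'$ from $|E'|>0$) are exactly the right points to make explicit.
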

\begin{proof}
We will construct $A'$ and $B'$ using an iterative procedure. Initially let $A'=A$ and $B'=B$. Let $G'=(A'\sqcup B',E')$ be the induced subgraph of $G$. If there is a vertex in $A'$ with degree (in the induced subgraph $G'$) less than $\frac{|E|}{4|A|}$, remove it from $A'$. If there is a vertex in $B'$ with degree (in the induced subgraph $G'$) less than $\frac{|E|}{4|B|}$, remove it from $B'$. At the end of this procedure, we are left with sets $A',B'$ with the required min-degrees.  We can count the number of edges lost as we remove vertices in the procedure. Whenever a vertex in $A'$ is removed we lose at most  $\frac{|E|}{4|A|}$ edges and whenever a vertex from $B'$ is removed we lose at most  $\frac{|E|}{4|B|}$ edges. So $$|E'|\ge |E|-|A|\frac{|E|}{4|A|}-|B|\frac{|E|}{4|B|}\ge |E|/2.$$
\end{proof}

\section{Proof of Theorem~\ref{thm-manyrichlines}}\label{sec-manyrichlines}

The main technical tool will be the following lemma, which shows that one can find a vanishing polynomial of low degree, assuming each point is in many rich lines. 

\begin{lem}\label{lem-findpoly}
For each $d \geq 1$ there is a constant $K_d \leq 32(2d)^d$ such that the following holds. Let $V \subset \C^d$ be a set of $n$ points and let $r\geq 4$ be an integer. Suppose that, through each point $v \in V$, there are at least $k$ $r$-rich lines where $$k \geq K_d\cdot \frac{ n}{r^{d-2}}.$$ Then, there exists a non-zero polynomial $f \in \C[x_1,\ldots,x_d]$ of degree at most $r-2$ such that $f(v)=0$ for all $v \in V$. 

If we have the stronger condition that the number of $r$-rich lines through each point of $V$ is between $k$ and $8k$  then we can get the same conclusion (vanishing $f$ of degree $r-2$) under the weaker inequality $$k \geq K_d\cdot \frac{n}{r^{d-1}}.$$
\end{lem}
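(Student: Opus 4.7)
My plan is to combine the Veronese embedding with the design-matrix rank bound from Theorem~\ref{thm-rankbound}, using the dependencies among Veronese images of collinear $r$-tuples as rows of a design matrix. Set $\phi = \phi_{d,r-2}: \C^d \to \C^M$ with $M = \m(d,r-2)$. Since $\m(d,s) \geq (s/d)^d$ and $r-2 \geq r/2$ whenever $r \geq 4$, we have $M \geq (r/(2d))^d = r^d/(2d)^d$. By Claim~\ref{cla-veronese-vanish}, finding a nonzero degree-$(r-2)$ polynomial vanishing on $V$ is equivalent to $\phi(V)$ being contained in a hyperplane of $\C^M$, so I will assume for contradiction that $\mathrm{span}(\phi(V)) = \C^M$ and derive a contradiction.

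Build the design matrix $A$ as follows. For each $r$-rich line $L$, fix an arbitrary cyclic ordering $p_0, \ldots, p_{s_L-1}$ of the points in $L \cap V$ (where $s_L \geq r$). For each $v = p_i \in L \cap V$, let $S_{L,v} = \{p_i, p_{i+1}, \ldots, p_{i+r-1}\}$ with indices taken mod $s_L$; this ``cyclic window'' has exactly $r$ elements. By Claim~\ref{cla-veronese-line} applied with parameter $r-2$, the images $\{\phi(w) : w \in S_{L,v}\}$ satisfy a linear dependence that is unique up to scalar with all $r$ coefficients nonzero (the ``moreover'' part forbids any coefficient from vanishing). Place these coefficients into a row of $A$ indexed by $(L,v)$ and supported on $S_{L,v}$; columns of $A$ are indexed by $V$.

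I now check the design parameters of $A$ and apply Theorem~\ref{thm-rankbound}. Each row has support exactly $r$, so $q = r$. A fixed point $v$ lies in $S_{L,v'}$ precisely when $v'$ is one of the $r$ cyclic predecessors of $v$ on $L$ (including $v$ itself), so $v$ contributes to exactly $r$ rows per rich line through it, giving minimum column support at least $rk$. Two distinct points lie on at most one common line, and within that line appear together in at most $r$ of the cyclic windows (at most $r - d$ when $s_L > r$ and their cyclic distance is $d$, and exactly $r$ in the degenerate case $s_L = r$ where all windows coincide), so $t \leq r$. Inequality~\eqref{eq-rank1} then gives
\[\mathrm{rank}(A) \;\geq\; n - \frac{n \cdot r \cdot r^2}{rk} \;=\; n - \frac{nr^2}{k}.\]
Since every row of $A$ is by construction a linear relation satisfied by the rows of the matrix $\Phi$ whose rows are $\phi(v)$, the row-span of $\Phi$ lies in $\ker A$, hence $\dim\mathrm{span}(\phi(V)) \leq n - \mathrm{rank}(A) \leq nr^2/k$. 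Choosing $K_d = 16(2d)^d$ in the hypothesis forces $nr^2/k \leq r^d/(16(2d)^d) \leq M/16 < M$, contradicting our assumption and proving the first assertion via Claim~\ref{cla-veronese-vanish}.

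For the strengthened hypothesis in which each point lies on between $k$ and $8k$ rich lines, I would use \eqref{eq-rank2} instead. Double counting gives $m = \sum_L s_L = \sum_v \#\{\text{rich lines through } v\} \leq 8nk$, so \eqref{eq-rank2} yields $\mathrm{rank}(A) \geq n - mtq^2/k'^2 \geq n - 8nkr^3/(rk)^2 = n - 8nr/k$, and the same contradiction now needs only $k \geq 16n(2d)^d/r^{d-1}$, giving $K_d = 16(2d)^d \leq 32(2d)^d$. The main subtlety I anticipate is keeping the design parameters uniform across rich lines of varying sizes $s_L$; the cyclic-window construction is designed precisely so that the column support of $v$ grows by exactly $r$ per rich line through $v$ irrespective of $s_L$, while $t$ stays bounded by $r$ even in the degenerate case $s_L = r$ where all $r$ windows for that line coincide.
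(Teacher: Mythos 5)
Your proposal is correct and follows essentially the same route as the paper: the degree-$(r-2)$ Veronese embedding, a design matrix whose rows encode the dependencies of collinear $r$-tuples guaranteed by Claim~\ref{cla-veronese-line} (with all coefficients nonzero by its ``moreover'' part), the rank bounds of Theorem~\ref{thm-rankbound}, and Claim~\ref{cla-veronese-vanish} to convert low rank into a vanishing polynomial. The only difference is bookkeeping: you take all $r$ cyclic windows on each rich line, giving an $(r,rk,r)$-design matrix, while the paper takes a near-disjoint cover of each line by $r$-tuples, giving an $(r,k,2)$-design matrix; the factor-$r$ trade-off cancels in both \eqref{eq-rank1} and \eqref{eq-rank2}, so you land on the same conclusion with $K_d\le 32(2d)^d$ (your ``row-span of $\Phi$ lies in $\ker A$'' should read column span, but the intended inequality $\rank(\Phi)\le n-\rank(A)$ is what you use and is correct).
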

\begin{proof}

 Let $V = \{v_1,\ldots,v_n\}$ and let $\phi = \phi_{d,r-2} : \C^d \mapsto \C^{\m(d,r-2)}$ be the Veronese embedding with degree bound $r-2$. Let us denote $U = \{u_1,\ldots,u_n\} \subset \C^{\m(d,r-2)}$ with $u_i = \phi(v_i)$ for all $i \in [n]$.

We will prove the lemma by showing that $U$ is contained in a hyperplane and then using  Claim~\ref{cla-veronese-vanish} to deduce the existence of the vanishing polynomial. Let $M$ be an $n \times \m(d,r-2)$ matrix whose $i$'th row is $u_i = \phi(v_i)$. To show that $U$ is contained in a hyperplane, it is enough to show that $\rank(M) < \m(d,r-2)$. This will imply that the columns of $M$ are linearly dependent, which means that all the rows lie in some hyperplane.

We will now construct a design matrix $A$ such that $A \cdot M=0$. Since $\rank(A) + \rank(M) \leq n$, we will be able to translate a lower bound on the rank of $A$ (which will be given by Theorem~\ref{thm-rankbound}) to the required upper bound on the rank of $M$.  Each row in  $A$ will correspond to some collinear $r$-tuple in $V$. We will construct $A$ in several stages. First, for each $r$-rich line $L \in \cL_r(V)$ we will construct a set of $r$-tuples $R_L \subset \binom{V}{r}$ such that
\begin{enumerate}
	\item Each $r$-tuple in $R_L$ is contained in  $L \cap V$.
	\item Each point $v \in L \cap V$ is in at least one $r$-tuple from $R_L$.
	\item Every pair of distinct points $u,v \in L \cap V$ appear together in at most two $r$-tuples from $R_L$.
\end{enumerate}
If $|L \cap V|$ is a multiple of $r$, we can construct such a set $R_L$ easily by taking a disjoint cover of $r$-tuples. If $|L \cap V|$ is not a multiple of $r$ (but is still of size at least $r$) we can take a maximal set of disjoint $r$-tuples inside it and then add to it one more $r$-tuple that will cover the remaining elements and will otherwise intersect only one other $r$-tuple. This will guarantee  that the third condition holds. We define $R \subset {V \choose r}$ to be the union of all sets $R_L$ over all $r$-rich lines $L$.  We can now prove:

\begin{claim}\label{cla-rtuples}
The set $R \subset {V \choose r}$ defined above has the following three properties.
\begin{enumerate}
	\item Each point $v \in V$ is contained in at least $k$ $r$-tuples from $R$.
	\item Every pair of distinct points $u,v \in V$ is contained together in at most two $r$-tuples from $R$.
	\item Let $(v_{i_1},\ldots,v_{i_r}) \in R$. Then there exists $r$ non zero coefficients $\alpha_1,\ldots,\alpha_{r} \in \C$ so that $ \alpha_1 \cdot u_{i_1} + \ldots + \alpha_r \cdot u_{i_r} = 0$.
 \end{enumerate}
If, in addition, we know that each point belongs to at most $8k$ rich lines (as in the second part of the lemma) then we also have that  $|R|\le 16nk/r$.
\end{claim}
\begin{proof}
The first property follows from the fact that each $v$ is in at least $k$ $r$-rich lines and that each $R_L$ with $v \in L$ has at least  one $r$-tuple containing $v$. The second property follows from the fact that each pair $u,v$ can belong to at most one $r$-rich line $L$ and that each $R_L$ can contain at most two $r$-tuples with both $u$ and $v$. The fact that the $r$-tuple of point $u_{i_1},\ldots,u_{i_r}$ is linearly dependent follows from  Claim~\ref{cla-veronese-line}. The fact that all the coefficients $\alpha_j$ are non zero holds since no proper subset of that $r$-tuple is linearly  dependent (by the `moreover' part of Claim~\ref{cla-veronese-line}). If each point is in at most $8k$ lines then each point is in at most $16k$ $r$-tuples (at most two on each line). This means  that there could be at most $16nk/r$ tuples in $R$ since otherwise, some point would be in too many tuples.
\end{proof}
 
We now construct the matrix $A$ of size $m\times n$ where $m=|R|$. For each $r$-tuple $(v_{i_1},\ldots,v_{i_r}) \in R$ we add a row to $A$  (the order of the rows does not matter) that has zeros in all positions except $i_1,\ldots,i_r$ and has values $\alpha_1,\ldots, \alpha_r$ given by Claim~\ref{cla-rtuples} in those positions. Since the rows of $M$ are the points $u_1,\ldots,u_n$, the third item of Claim~\ref{cla-rtuples} guarantees that $A \cdot M = 0$ as we wanted. The next claim asserts that $A$ is a design matrix.

\begin{claim}\label{cla-Adesign}
The matrix $A$ constructed above is a $(r,k,2)$-design matrix. 
\end{claim}
\begin{proof}
Clearly, each row of $A$ contains at most $r$ non zero coordinates. Since each point $v \in V$ is in at least $k$ $r$-tuples from $R$ we have that each column of $A$ contains at least $k$ non-zero coordinates. The size of the intersection of the supports of two distinct columns in $A$ is at most two by item (2) of Claim~\ref{cla-rtuples}.
\end{proof}

We now use Eq.~(\ref{eq-rank1}) from  Theorem~\ref{thm-rankbound} to get $$\rank(A)\ge n-\frac{2nr^2}{k}.$$ This implies (using $r \geq 4$) that
$$ \rank(M) \leq \frac{2nr^2}{k}  \leq \left(\frac{r-2}{d}\right)^d < \m(d,r-2),$$ if $$k \geq  2(2d)^d \cdot \frac{n}{r^{d-2}}.$$

If we have the additional assumption that each point is in at most $8k$ lines then, using the bound $m= |R| \leq 16nk/r $ in Eq.~(\ref{eq-rank2}) of Theorem~\ref{thm-rankbound}. We get

$$\rank(A) \geq n-\frac{2mr^2}{k^2} \geq n - \frac{32nr}{k}$$ which gives
$$ \rank(M) \leq \frac{32nr}{k}  < \m(d,r-2)$$  for 
$$ k  \geq 32(2d)^d \frac{n}{r^{d-1}}.$$ Hence, the rows of $M$ lie in a hyperplane. This  completes the proof of the lemma.
\end{proof}

\begin{rem}
\label{rem-bounded-degree-curves}
Lemma~\ref{lem-findpoly} can be extended to the case where we have $r$-rich curves of bounded degree $D=O(1)$ with `two degrees of freedom' i.e. through every pair of points there can be at most $C=O(1)$ distinct curves (e.g. unit circles). Under the Veronese embedding $\phi_{d,\floor{\frac{r-2}{D}}}$, the images of $r$ points on a degree $D$ curve are linearly dependent.  So we can still construct a design matrix as in the above proof where the design parameters depend on $D,C$. Once we get a hypersurface of degree $\floor{\frac{r-2}{D}}$ vanishing on all the points, the hypersurface should also contain all the degree $D$ $r$-rich curves.
\end{rem}

We will now use Lemma~\ref{lem-findpoly} to prove Theorem~\ref{thm-manyrichlines}. The reduction uses Lemma~\ref{lem-refine} to reduce to the case where each point has many rich lines through it. Once we find a vanishing low degree polynomial we analyze its singularities to find a point such that all lines though it are in some hyperplane.

\begin{proof}[Proof of Theorem~\ref{thm-manyrichlines}]
 Since $\cL_r(V)\le n^2$ for all $r\ge 2$, by choosing $C_d> R_d^d$ we can assume that $r\ge R_d$ for any large constant $R_d$ depending only on $d$.

Let $\cL = \cL_r(V)$ be the set of $r$-rich lines in $V$ and let $I=I(\L,V)$ 
be the set of incidences between $\L$ and $V$. By the conditions of the theorem we have
\begin{equation}\label{eq-largeI}
	|I| \ge r|\cL| \geq C_d\cdot\frac{\alpha n^2}{r^{d-1}}.
\end{equation}

Applying Lemma~\ref{lem-refine} to the incidence graph between $V$ and $\L$, we obtain non-empty subsets $V' \subset V$ and $\cL' \subset \cL$ such 
that each $v \in V'$ is in at least $k=\frac{|I|}{4n}$ lines from $\cL'$ and such that each line in 
$\cL'$ is $r/4$-rich w.r.t to the set $V'$ and $$|I'|=|I(\L',V')|\ge |I|/2.$$ We would like to  apply Lemma~
\ref{lem-findpoly} with the stronger condition that each point is incident on approximately the same number of lines (which gives better dependence on $r$). To achieve this, we will further refine our set of points using dyadic pigeonholing. 

Let $V'=V'_1\sqcup V'_2\sqcup \cdots$ be a partition of $V'$ into disjoint subsets where $V'_j$ is the set of points incident to at least $k_j=2^{j-1}k$ and less than $2^{j}k$ lines from $\L'$. Let $I'_j=I(\L',V'_j)$, so that $$\sum_{j\ge 1} |I_j'|=|I'|\ge |I|/2.$$ Since $\sum_{j\ge 1}\frac{1}{2j^2}<1$, there exists $j$ such that $|I'_j|\ge \frac{|I|}{4j^2}.$ Let us fix $j$ to this value for the rest of the proof.

We will first upper bound $j$. Since $|I_j'|>0$, $V_j'$ is non-empty and let $p\in V'_j$. There are at least $k_j$ $(r/4)$-rich lines through $p$ and by choosing $R_d\ge 8$, there are at least $r/4-1\ge r/8$ points other than $p$ on each of these lines and they are all distinct. So, 
$$n=|V|\ge 2^{j-1}k\cdot \frac{r}{8} = \frac{2^{j-6}r|I|}{n}\ge C_d \frac{2^{j-6}\alpha n}{r^{d-2}}\ge \frac{2^{j-6}n}{r^{d-2}}.$$ This implies
$j\lesssim  d\log r$ where we assumed above that $C_d\ge 1$.

Since the lines in $\L'$ need not be $r/4$-rich w.r.t $V'_j$, we need further refinement. Apply Lemma~\ref{lem-refine} again on the incidence graph $I'_j= I(\L',V'_j)$ to get non-empty $V''\subset V'_j$ and $\L''\subset \L'$ and $$|I''|=|I(\L'',V'')|\ge \frac{|I'_j|}{2}\ge \frac{|I|}{8j^2} \ge \frac{r|\L|}{8j^2}.$$ Each line in $\L''$ is incident to at least $$\frac{|I'_j|}{4|\L'|} \geq \frac{r}{16j^2}=r_0$$ points from $V''$ and so $\L''$ is $r_0$-rich w.r.t $V''$. And each point in $V''$ is incident to at least $$\frac{|I'_j|}{4|V'_j|}\ge \frac{k_j}{4}=2^{j-3} k=k_0$$  and at most $2^{j}k=8k_0$ lines from $\L''$. Since $j\lesssim d\log r$, we can assume $r_0=\frac{r}{16j^2}\ge 4$ by choosing $R_d \gg d^3$.

The following claim shows that we can apply Lemma~\ref{lem-findpoly} to $V''$ and $\L''$
\begin{claim} $k_0\ge K_d\cdot\frac{|V''|}{r_0^{d-1}}$ where $K_d$ is the constant in Lemma~\ref{lem-findpoly}
\end{claim}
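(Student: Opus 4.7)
The plan is a direct verification: substitute the definitions of $k_0$ and $r_0$ into the claimed inequality, use the hypothesis $|I| \ge C_d \alpha n^2/r^{d-1}$, and check that the resulting inequality holds provided $C_d$ is chosen large enough (as a function of $d$ and $K_d$).

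First, since $V'' \subset V$, it is enough to prove the stronger statement $k_0 \ge K_d\cdot n/r_0^{d-1}$. Unfolding the definitions, $k_0 = 2^{j-3}k$ where $k = |I|/(4n)$, and the theorem's hypothesis yields $|I| \ge r|\cL| \ge C_d\, \alpha n^2/r^{d-1}$. Putting these together gives
\[
k_0 \;\ge\; \frac{2^{j-3}}{4n}\cdot\frac{C_d\,\alpha\, n^2}{r^{d-1}} \;=\; \frac{2^{j-5}\, C_d\,\alpha\, n}{r^{d-1}}.
\]
On the other hand, since $r_0 = r/(16j^2)$,
\[
\frac{K_d\cdot n}{r_0^{d-1}} \;=\; \frac{K_d\cdot 16^{d-1}\, j^{2(d-1)}\, n}{r^{d-1}}.
\]
So the desired inequality reduces to
\[
2^{j-5}\, C_d\,\alpha \;\ge\; K_d\cdot 16^{d-1}\, j^{2(d-1)}.
\]

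Second, since $\alpha \ge 1$, this will follow once we know that $C_d\cdot 2^{j-5} \ge K_d\cdot 16^{d-1}\cdot j^{2(d-1)}$ for all positive integers $j$. The key observation is that the ratio $j^{2(d-1)}/2^{j-5}$ has a finite maximum $M_d$ over $j\ge 1$ (since exponentials dominate polynomials), and a short calculus computation shows $M_d$ is of order $d^{O(d)}$; specifically the maximum is attained near $j \approx 2(d-1)/\ln 2$ with value roughly $(2(d-1)/(e\ln 2))^{2(d-1)}$. Taking $C_d \ge K_d\cdot 16^{d-1}\cdot M_d$ then settles the claim.

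Finally, I would verify that this constraint on $C_d$ is consistent with the theorem's stated bound $C_d = d^{cd}$. Since $K_d \le 32(2d)^d$, $16^{d-1} = d^{O(d)}$, and $M_d = d^{O(d)}$, the product is of the form $d^{O(d)}$, so one can indeed choose $C_d = d^{cd}$ for some absolute constant $c$. The only real step here is the polynomial-versus-exponential comparison that bounds $M_d$; the rest is plugging in definitions and tracking constants. There is no genuine obstacle, just bookkeeping to make sure every factor of $j$, $16$, and $K_d$ gets absorbed into $C_d$.
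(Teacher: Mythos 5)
Your proof is correct and follows essentially the same route as the paper: substitute the definitions of $k_0$ and $r_0$, invoke $|I|\ge C_d\alpha n^2/r^{d-1}$, and absorb the resulting $\max_j j^{2(d-1)}/2^{\Theta(j)}$ factor into the choice of $C_d$. The only difference is that you use the trivial bound $|V''|\le n$, whereas the paper uses the sharper $|V''|\le |V_j'|\le n/2^{j-3}$ (coming from the dyadic degree lower bound on points of $V_j'$), which gains an extra $2^{j}$ and hence allows a slightly smaller $C_d$; both requirements are of the form $d^{O(d)}$, so your version is still consistent with the constants claimed in the theorem.
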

\begin{proof}
We have
$$|V''|\le |V'_j| \le \frac{|I|}{2^{j-1}k}=\frac{n}{2^{j-3}}.$$  So it is enough to show that 
$$k_0\ge K_d\cdot\frac{ n}{2^{j-3} r_0^{d-1}}.$$
Substituting the bounds we have  for $k_0$ and $r_0$, this will follow from to $$|I| \ge 16K_d\cdot 2^{4d}\cdot \left(\frac{j^{2(d-1)}}{2^{2j}}\right) \frac{n^2}{r^{d-1}}$$ which follows from Eq.~(\ref{eq-largeI}) by choosing $C_d>16K_d\cdot 2^{4d}\cdot \max_j\left(\frac{j^{2(d-1)}}{2^{2j}}\right)$.
\end{proof} 
Hence, by Lemma~\ref{lem-findpoly}, there exists a non-zero polynomial $f \in \C[x_1,\ldots,x_d]$ of degree at most $r_0-2$, vanishing at all points of $V''$. W.l.o.g suppose $f$ has minimal total degree among all polynomials vanishing on $V''$. Since $f$ has degree at most $r_0-2$ it must vanish identically on all lines in $\cL''$.

We say that a point $v \in V''$ is `flat' if the set  of lines from $\cL''$ passing through $v$ are contained in some affine hyperplane through $v$. Otherwise, we call the point $v$ a `joint'. We will show that there is at least one flat point in $V''$. Suppose towards a contradiction that all points in $V''$ are joints. Let $v \in V''$ be some point and let $\nabla f(v)$ be the gradient of $f$ at $v$. Since $f$ vanishes identically on all lines in $\cL''$ we get that $\nabla f(v)=0$ ($v$ is a singular point of the hypersurface defined by $f$). We now get a contradiction since one of the coordinates of $\nabla f$ is a non-zero polynomial of degree smaller than the degree of $f$ that vanishes on the entire set $V''$. 

Hence, there exists a point $v \in V''$ and an affine a hyperplane $H$ passing through $v$ such that all $r_0$-rich lines in $\cL''$ passing through $v$ are contained in $H$. Since there are at least $k_0$ such lines, and each line contain at least $r_0-1$ points in addition to $v$, we get that $H$ contains at least $$(r_0-1)k_0 \geq \frac{r}{32j^2}\cdot 2^{j-3}\frac{|I|}{4n} \ge C_d \left(\frac{2^{j-10}}{j^2}\right) \frac{\alpha n}{r^{d-2}}\ge C_d' \frac{\alpha n}{r^{d-2}}$$ points from $V$ where $C_d'= C_d \cdot \min_j \left(\frac{2^{j-10}}{j^2}\right)$. Observing the proof we can take the constants to be $C_d=d^{\Theta(d)}$ and $C'_d=\frac{C_d}{2^{11}}$. 
\end{proof}

\begin{rem}\label{rem-hyperplane_lines}
Observe that, we can take $\L$ to be any subset of $\L_r(V)$ of size $\ge C_d \frac{\alpha n^2}{r^d}$ and obtain the same conclusion. Moreover, the hyperplane $H$ that we obtain at the end contains $k_0\gtrsim \frac{\alpha n}{r^{d}}$ lines of $\L$.
\end{rem}
\section{Proof of Theorem~\ref{thm-apbound}}\label{sec-apbound}
We will reduce the problem of bounding $r$-term arithmetic progressions to that of bounding $r$-rich lines using the following claim:
\begin{claim}\label{cla-ap_to_richline}
Let $V\subset \C^d$ then $\AP_r(V)\le |\L_r([r] \times V)|$ where $[r]=\{0,1,\cdots,r-1\}$
\end{claim}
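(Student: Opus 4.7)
The plan is to exhibit an explicit injection from the set of $r$-term arithmetic progressions in $V$ into the set of $r$-rich lines in $[r] \times V \subset \C^{d+1}$. This is essentially a ``lifting'' construction: an AP has a natural parametrization by its index, and by recording that index as an extra coordinate we straighten the AP into a genuine line.

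Concretely, given an AP $P = \{y, y+x, y+2x, \ldots, y+(r-1)x\} \subset V$ with $x \neq 0$, I would associate to $P$ the line $L_P \subset \C^{d+1}$ through $(0,y)$ in the direction $(1, x)$, i.e. $L_P = \{(t, y + tx) : t \in \C\}$. For each $i \in \{0, 1, \ldots, r-1\}$, the point $(i, y+ix)$ lies on $L_P$, has first coordinate in $[r]$, and second coordinate $y+ix \in P \subset V$. Thus $L_P$ contains at least $r$ points of $[r] \times V$, so $L_P \in \L_r([r] \times V)$.

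To check that the map $P \mapsto L_P$ is injective, note that since $x \neq 0$ the line $L_P$ is not vertical (its first coordinate is a non-constant affine function of the parameter $t$). Therefore, for each value $i \in [r]$, there is a unique point on $L_P$ with first coordinate $i$, and its second coordinate equals $y+ix$. Reading off these second coordinates for $i = 0, 1, \ldots, r-1$ recovers the AP. Since an unordered AP admits two orderings (forward with common difference $x$ and reverse with common difference $-x$), one either fixes a canonical ordering per AP (e.g.\ lexicographically smallest starting point) to get a single well-defined injection, or observes that the two orderings produce two distinct lines, which in fact yields the stronger bound $2\AP_r(V) \le |\L_r([r] \times V)|$.

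There is no real obstacle here; the only mild care needed is to rule out the degenerate case $x = 0$ (which is not an AP in the intended sense for $r \ge 2$) and to handle the two-orderings point so that the counting is unambiguous. The resulting injection immediately yields $\AP_r(V) \le |\L_r([r] \times V)|$, as claimed.
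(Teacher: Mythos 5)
Your proof is correct and is essentially the paper's own argument: you lift each nontrivial progression $\{y,y+x,\ldots,y+(r-1)x\}$ to the line $\{(t,y+tx):t\in\C\}$, which is $r$-rich with respect to $[r]\times V$, and recover the progression from the line by reading off the points with first coordinate $0,\ldots,r-1$, giving injectivity. Your extra care about the two orderings (sign of the common difference) only sharpens the count and does not change the argument.
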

\begin{proof}
For $u,w\in \C^d, w\ne 0$, let $(u,u+w,\cdots,u+(r-1)w)$ be an $r$-term arithmetic progression in $V$. Then the line $\{(0,u)+z(1,w)\}_{z\in \C}$ is $r$-rich w.r.t the point set $[r]\times V \subset \C^{1+d}$; moreover this mapping is injective.
\end{proof}
We need the following claim regarding arithmetic progressions in product sets.
\begin{claim}\label{cla-aptensor}
Let $V \subset \C^d$ be a set of $n$ points and let $\ell \geq 1$ be an integer. Then, for all $r \geq 1$, the product set $V^\ell \subset \C^{d\ell}$ satisfies $$ \AP_r(V^\ell) \geq \AP_r(V)^\ell.$$
\end{claim}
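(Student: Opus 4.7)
\textbf{Proof proposal for Claim~\ref{cla-aptensor}.}

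The plan is to exhibit an injective map from $\ell$-tuples of $r$-term arithmetic progressions in $V$ to $r$-term arithmetic progressions in $V^\ell$. This is the natural ``product'' map: given APs $(y_i, y_i + x_i, y_i + 2x_i, \ldots, y_i + (r-1)x_i)$ in $V$ for $i = 1, \ldots, \ell$ (each with $x_i \in \C^d$ nonzero, $y_i \in \C^d$), I would form the sequence in $V^\ell = (\C^d)^\ell$ with starting point $Y = (y_1, \ldots, y_\ell) \in V^\ell$ and common difference $X = (x_1, \ldots, x_\ell) \in \C^{d\ell}$. Every term $Y + j X = (y_1 + j x_1, \ldots, y_\ell + j x_\ell)$ for $j = 0, 1, \ldots, r-1$ lies in $V^\ell$ because each of its $\ell$ coordinate blocks is a term of an AP in $V$, and hence lies in $V$.

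Next I would verify that $X \neq 0$ so that this really is an $r$-term AP in the sense of the paper (i.e., a set of $r$ distinct points). Since $r \geq 2$, each original AP in $V$ has $r$ distinct points, which forces $x_i \neq 0$ for every $i$, so in particular $X \neq 0$.

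Finally, I would check injectivity of the map. Given the AP $(Y, Y+X, \ldots, Y+(r-1)X)$ in $V^\ell$, projecting onto the $i$-th block of $d$ coordinates recovers exactly the $i$-th original AP $(y_i, y_i + x_i, \ldots, y_i + (r-1)x_i)$. Thus distinct $\ell$-tuples of APs in $V$ produce distinct APs in $V^\ell$, yielding $\AP_r(V^\ell) \geq \AP_r(V)^\ell$.

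There is essentially no obstacle here; the only mild subtlety is keeping the definition of ``AP'' consistent (ordered progression vs.\ unordered set), but on either convention the projection argument preserves injectivity, so the inequality follows immediately.
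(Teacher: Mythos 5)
Your proposal is correct and is essentially the paper's own argument: the same product map sending an $\ell$-tuple of progressions with starts $y_i$ and differences $x_i$ to the progression in $V^\ell$ with start $(y_1,\ldots,y_\ell)$ and difference $(x_1,\ldots,x_\ell)$, with injectivity recovered by block projection. You even address the same subtlety the paper flags (a progression as a set determines its difference only up to sign, so one fixes a representative), so there is nothing to add.
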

\begin{proof}
Let $P(V)$ be the set of $r$-term arithmetic progressions in $V$ and let $P(V^\ell)$ be the set of $r$-term progressions in $V^\ell$. We will describe an injective mapping from $P(V)^\ell$ into $P(V^\ell)$. For $u,w \in \C^d$ let $L_{u,w} = \{u,u+w,\ldots,u+(r-1)w\}$ be the $r$-term progression starting at $u$ with difference $w$. Let $u_1,\ldots,u_\ell,w_1,\ldots,w_\ell \in \C^d$ such that $L_{u_i,w_i} \in P(V)$ for each $i \in [\ell]$. We map them into the arithmetic progression $L_{u,w} \in P(V^\ell)$ with $u = (u_1,\ldots,u_\ell)$ and $w = (w_1,\ldots,w_\ell)$. Clearly, this map is injective (care should be taken to assign each progression a unique difference since these are determined up to a sign).
\end{proof}

\begin{proof}[Proof of Theorem~\ref{thm-apbound}]

Let us assume $\AP_r(V)\gg_{d,\epsilon} \frac{n^2}{r^{d-\epsilon}}$. Let $\ell=\lceil\frac{1}{\epsilon}\rceil$. By Claim~\ref{cla-aptensor}, $\AP_r(V^\ell)\ge \AP_r(V)^\ell$. Let $\L$ be the collection of $r$-rich lines w.r.t $[r]\times V^\ell\subset \C^{1+d\ell}$ corresponding to non-trivial $r$-term arithmetic progressions in $V^\ell$, as given by Claim~\ref{cla-ap_to_richline}. So
 $$|\L_r([r]\times V^\ell)|\ge |\L| = \AP_r(V^\ell)\ge \AP_r(V)^\ell\gg_{d,\epsilon} \frac{n^{2\ell}}{r^{d\ell-\epsilon\ell}}\ge \frac{n^{2\ell}}{r^{d\ell -1}}= \frac{(n^{\ell}r)^2}{r^{d\ell+1}}.$$ By Theorem~\ref{thm-manyrichlines} (choosing the constants appropriately), there is a hyperplane $H$ in $\mathbb{C}^{1+d\ell}$ which contains $\gtrsim_{d,\epsilon} \frac{n^\ell r}{r^{d\ell-1}}$ points of $[r]\times V^\ell$. Moreover, by Remark~\ref{rem-hyperplane_lines}, $H$ contains some of the lines of $\L$. So $H$ cannot be one of the hyperplanes $\{z_{1}=i\}_{i\in [r]}$ because they do not contain any lines of $\L$. So the intersection of $H$ with one of the $r$ hyperplanes $\{z_{1}=i\}_{i\in [r]}$ (say $j$) gives a $(d\ell-1)$-flat which contains  $\gtrsim_{d,\epsilon} \frac{n^\ell}{r^{d\ell-1}}$ points of $V^\ell\times \{j\}$. This gives a hyperplane $H'$ in $\C^{d\ell}$ which contains $\gtrsim_{d,\epsilon} \frac{n^\ell}{r^{d\ell-1}}$ points of $V^\ell$. Now by Lemma~\ref{lem-hyperplaneproduct}, we can conclude that there is a hyperplane in $\mathbb{C}^d$ which contains $\gtrsim_{d,\epsilon} \frac{n}{r^{d\ell-1}}\ge \frac{n}{r^{2d/\epsilon-1}}$ points of $V$.
\end{proof}
\section{Proof of Theorem~\ref{thm-newsumproduct}}\label{sec-newsumproduct}

Suppose in contradiction that $|T_C| > \lambda N/C^2$ for some large absolute constant $\lambda$ which we will choose later. Let $Q \subset T_C$ be a set of size  $$ |Q| = \left\lceil \frac{\lambda N}{C^2} \right\rceil$$ 
containing the zero element $0 \in Q$ (we have $0 \in T_C$ since the sum-set  $|A+0A| = |A|$ is small). Let us denote by $$r = |Q|$$ and let $$ m = \frac{N^{1.5}}{C\sqrt{\log N}}.$$
Let   $$	d = \lceil 100\log N \rceil.$$ We will  use our assumption on the size of $Q$ to construct a configuration of points $V \subset \C^d$ with many $r$-rich lines. Then we will use Lemma~\ref{lem-findpoly} to derive a contradiction. The set $V$ will be a union of the sets
$$	V_t = \{t\} \times (A+ tA)^{d-1} 	   = \{ (t,a_2 + t b_2, \ldots, a_d + tb_d) \,|\, a_i,b_j \in A \}$$ over all $t \in Q$. That is $$ V = \bigcup_{t \in Q} V_t.$$ Notice the special structure of the set 
$$ V_0 = \{0\} \times A^{d-1}. $$ We denote by 
\begin{equation}\label{eq-Vsize}
	 n = |V| \leq r \cdot m^{d-1}
\end{equation}

Notice that, by construction, for every $a = (0,a_2,\ldots,a_d)$ and every $b = (1,b_2,\ldots,b_d)$ (with all the $a_i,b_j$ in $A$), the line through the point $a \in V_0$ in direction $b$ is $r$-rich w.r.t  $V$. Let us denote by $\cL \subset \cL_r(V)$ the set of all lines of this form. We thus have
\begin{equation}
	|\cL|  = N^{2d-2}.
\end{equation}
Let $I=I(V,\L)$, then $|I|\ge r|\L|$.
We now use Lemma~\ref{lem-refine} to find subsets $V' \subset V$ and $\cL' \subset \cL$ such that each point in $V'$ is in at least 
$$ k = \frac{rN^{2d-2}}{4n}$$ lines from $\cL'$, each line in $\cL'$ is $r_0=r/4$-rich w.r.t to the set $V'$ and $$|I(V',L')|\ge |I|/2.$$ Observe that, since each line in $\L'$ contains at most $r$ points from $V'$, we have  $$|\L'|\ge |I(V',\L')|/r\ge |\L|/2.$$ The following claim shows that we can apply Lemma~\ref{lem-findpoly} on the set $V'$.
\begin{claim}
$$ k \geq K_d\frac{n}{r_0^{d-2}}.$$ where $K_d=32(2d)^d$ is the constant in Lemma~\ref{lem-findpoly}
\end{claim}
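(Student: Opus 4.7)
The plan is to verify the inequality $k \geq K_d\, n / r_0^{d-2}$ by direct substitution, using the explicit values of $k$, $r_0$, $n$, $m$, $d$ set up earlier in the argument and the assumed lower bound $r = |Q| \geq \lambda N / C^2$, then showing that $\lambda$ can be chosen as a sufficiently large absolute constant to make the resulting numerical inequality hold.

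First, I would substitute $k = rN^{2d-2}/(4n)$ and $r_0 = r/4$ into the target inequality and rearrange it to the equivalent form
$$ r^{d-1} N^{2d-2} \;\geq\; 4^{d-1}\, K_d \, n^2. $$
I would then apply the bound $n \leq r\, m^{d-1}$ from Eq.~(\ref{eq-Vsize}) (which holds by the definition $V = \bigcup_{t \in Q} V_t$ and $|V_t| = m^{d-1}$), which gives $n^2 \leq r^2 m^{2d-2}$ and reduces the problem to showing
$$ r^{d-3}\, (N/m)^{2d-2} \;\geq\; 4^{d-1} K_d. $$

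Next, I would plug in $m = N^{3/2}/(C\sqrt{\log N})$, so that $(N/m)^{2d-2} = (C^2 \log N / N)^{d-1}$, and use $r \geq \lambda N / C^2$ to obtain
$$ r^{d-3}\, (N/m)^{2d-2} \;\geq\; \lambda^{d-3}\, C^4 \, (\log N)^{d-1}\, /\, N^2. $$
Finally, I would estimate both sides with $d = \lceil 100 \log N \rceil$. Both $(\log N)^{d-1}$ and $4^{d-1} K_d = 4^{d-1} \cdot 32(2d)^d$ are of the shape $N^{\Theta(\log \log N)}$; a direct comparison of their exponents (both driven by $d \log \log N$) shows that their ratio is at most $N^{O(1)}$. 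The factor $\lambda^{d-3} = N^{100 \log \lambda - o(1)}$ is a genuine polynomial in $N$ whose degree grows with $\lambda$, so picking $\lambda$ large enough absorbs simultaneously the $N^{-2}$ loss, the residual gap between $(\log N)^{d}$ and $(2d)^d$, and all absolute constants. Since $C \geq 1$ (as $C \gg 1$ by hypothesis), the factor $C^4$ only helps.

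The only delicate step is the last one: the inequality mixes three scales, namely polynomial-in-$N$ factors, $N^{O(\log\log N)}$ factors, and a $C$-dependent factor, so one has to track carefully how $C$ (allowed to range up to $o(\sqrt{N})$) and the fixed absolute constant $\lambda$ interact, and verify in particular that the threshold on $\lambda$ does not depend on $N$ or $C$. This is where bookkeeping is most delicate, but I anticipate no conceptual difficulty: the $\lambda^{d-3}$ polynomial gain is more than generous to swamp the $N^{\Theta(\log\log N)}$ mismatch once $\lambda > (200)^{O(1)}$.
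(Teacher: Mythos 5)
Your proposal is correct and follows essentially the same route as the paper: the same rearrangement to $r^{d-1}N^{2d-2}\gtrsim (8d)^d n^2$, the same use of Eq.~(\ref{eq-Vsize}) and of the choice $d=\lceil 100\log N\rceil$ to reduce everything to a condition on the absolute constant $\lambda$. The only (cosmetic) difference is the finish: the paper raises the inequality $r^{d-3}\geq 32(8d)^d N^{d-1}/(C^{2}\log N)^{d-1}$ to the power $1/(d-3)$ and uses $X^{1/\ell}\leq 2$ for $\ell>\log X$ to get the clean sufficient condition $r\geq K'dN/(C^2\log N)$, whereas you substitute $r\geq \lambda N/C^2$ first and compare exponents of $N$ (noting $(8d)^d/(\log N)^{d-1}\cdot N^2=N^{O(1)}$ with an absolute constant exponent, beaten by $\lambda^{d-3}=N^{\Theta(\log\lambda)}$), which is an equally valid bookkeeping of the same estimate.
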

\begin{proof}
Plugging in the value of $k,r_0$ and rearranging, we need to show that
$$  \frac{N^{2d-2} r^{d-1}}{32(8d)^d} \geq n^2.$$
Using Eq.~(\ref{eq-Vsize}) to bound $n$ we get that it is enough to show
$$ \frac{N^{2d-2}r^{d-1}}{32(8d)^d} \geq  \frac{r^2 N^{3d-3}}{C^{2d-2} (\log N)^{d-1}}.$$
Rearranging, we need to show that
$$  r^{d-3} \geq \frac{32(8d)^{d}N^{d-1}}{(C^2)^{d-1}(\log N)^{d-1}}.$$
We now raise both sides to the power $1/(d-3)$ and use the fact that, for $\ell > \log X$, we have $1 \leq X^{1/\ell} \leq 2$. Thus it is enough to show
$$  r \geq \frac{K' d N}{C^2 \log N}$$ for some absolute constant $K'$.
Plugging in the value of $d$ we get that the claim would follow if
$$   r \geq \frac{100K'N}{C^2} $$ which holds by choosing $\lambda=100K'$.
\end{proof}

Since $C\ll\sqrt{N}$, $r_0\ge 4$. Applying Lemma~\ref{lem-findpoly}, we get a non-zero polynomial $f \in \C[x_1,\ldots,x_d]$ of degree at most $r_0-2$ that vanishes on all points in $V'$. This means that $f$ must also vanish identically on all lines in $\cL'$ (since these are all $r_0$-rich w.r.t $V'$). Since each line in $\cL'$ intersects $V_0$ exactly once, and since $|V_0| = N^{d-1}$, we get that there must be at least one point $v \in V_0$ that is contained in at least $|\cL'|/N^{d-1} \geq \frac{1}{2}N^{d-1}$ lines (in different directions) from $\cL'$. Let $\tilde f$ denote the homogeneous part of $f$ of highest degree. If $f$ vanishes identically on a line in direction $b \in \C^d$, this implies that $\tilde f(b) = 0$ (to see this notice that the leading coefficient of $g(t) = f(a + tb)$ is $\tilde f(b)$). Hence, since all the directions of lines in $\cL'$ are from the set $\{1\} \times A^{d-1}$, we get that $\tilde f$ has at least $\frac{1}{2}N^{d-1}$ zeros in the set $\{1\} \times A^{d-1}$. This contradicts Lemma~\ref{lem-SZ-hom} since the degree of $\tilde f$ is at most $r_0-2=r/4-2 < N/2$ (since $r = \lceil \lambda N/C^2 \rceil$ and $C \gg1$). This completes the proof of Theorem~\ref{thm-newsumproduct}.
\qed
\subsection{A proof of Theorem~\ref{thm-newsumproduct} using Szemer\'edi-Trotter in $\C^2$}
\label{sec-newsumproduct-usingST}
The following is a slightly stronger version of Theorem~\ref{thm-newsumproduct} (without the logarithmic factor), which we prove using a simple application of the two-dimensional Szemer\'edi-Trotter theorem (to derive Theorem~\ref{thm-newsumproduct} replace $C$ with $C\sqrt{N}$).
\begin{thm}
Let $A\subset \C$ be a set of $N$ complex numbers and let $C\gg1$. Define $$T_C=\left\{t\in \C: |A+tA|\le \frac{N^2}{C}\right\}.$$ Then 
$$|T_C|\lesssim \frac{N^2}{C^2}.$$
\end{thm}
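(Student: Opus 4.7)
The plan is to reduce the problem to a point--line incidence count in $\C^2$ and apply Szemer\'edi--Trotter (Theorem~\ref{thm-Szemeredi-Trotter}) directly, bypassing the polynomial machinery of Lemma~\ref{lem-findpoly}. Take the point set $P = A \times A \subset \C^2$, so $|P| = N^2$. For each $t \in T_C$, the map $(a,a') \mapsto a + ta'$ partitions $P$ into at most $|A + tA| \le N^2/C$ fibres, each of which lies on a line $\ell_{t,c} = \{(x,y) : x + ty = c\}$ for some $c \in A + tA$. Collecting these lines over all $t \in T_C$ gives a family $\cL = \bigsqcup_{t \in T_C} \cL_t$, where $\cL_t = \{\ell_{t,c} : c \in A + tA\}$.

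First I would verify that this union is genuinely disjoint: the slope of $\ell_{t,c}$ is $-1/t$ (with $t=0$ giving a vertical line, an easy special case), so the slope determines $t$, and lines from different $\cL_t$ are distinct. This yields $|\cL| \le |T_C| \cdot N^2/C$. Since each point of $P$ lies on exactly one line of each $\cL_t$, the total number of incidences is $|I(P, \cL)| = |T_C| \cdot N^2$.

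Then I would apply the Szemer\'edi--Trotter bound
\[
|I(P, \cL)| \lesssim |P|^{2/3} |\cL|^{2/3} + |P| + |\cL|
\]
and plug in the sizes computed above. The first term simplifies to $|T_C|^{2/3} N^{8/3} / C^{2/3}$; the $|P|$ term contributes a harmless $N^2$; and the $|\cL|$ term contributes $|T_C| N^2/C$, which is absorbed into the LHS $|T_C| N^2$ once $C \gg 1$. Either the first term dominates, yielding $|T_C|^{1/3} \lesssim N^{2/3}/C^{2/3}$ (i.e.\ $|T_C| \lesssim N^2/C^2$), or the $|P|$ term dominates and gives the stronger conclusion $|T_C| \lesssim 1$. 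In either case the claim follows.

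The computation is elementary once the incidence framework is in place, so the ``hard part'' is really just bookkeeping: verifying the disjointness of the $\cL_t$ across different $t$ (so that $|\cL|$ is a true sum rather than an overcount), checking that each point of $P$ contributes exactly one incidence to each $\cL_t$ (so that the incidence count $|T_C| N^2$ is exact), and noting that the $|\cL|$ error term on the right side of Szemer\'edi--Trotter is absorbed in the regime $C \gg 1$. No additional structural argument about $A$ or $T_C$ seems to be needed.
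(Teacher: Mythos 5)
Your argument is correct and is essentially the paper's own proof in dual form: the paper applies the complex Szemer\'edi--Trotter bound (in its $r$-rich-lines form, Theorem~\ref{thm-r-richlines-2dimensions}) to the point set $\bigcup_{t\in T_C}\{t\}\times(A+tA)$ and the $N^2$ lines $\{(z,a+za'):z\in\C\}$, which is exactly your configuration with the roles of points and lines exchanged, both resting on the same incidence relation $a+ta'=c$ and the same bookkeeping $|P|$ or $|\cL|\le |T_C|N^2/C$. The only detail worth recording is that in your second case the bound $|T_C|\lesssim 1$ does imply $|T_C|\lesssim N^2/C^2$, because $|A+tA|\ge N$ for every $t$, so $T_C$ is empty unless $C\le N$, i.e.\ unless $N^2/C^2\ge 1$.
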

\begin{proof}
Define the set of points $$P=\bigcup_{t\in T_C}\{t\}\times(A+tA)$$ and the set of lines $$\cL=\{(z,a+za')_{z\in \C}: a,a'\in A\}$$ in $\C^2$. Each line in $\cL$ contains $r=|T_C|$ points from $P$. So by using Theorem~\ref{thm-r-richlines-2dimensions}, we have
$$|\cL|\le K\left(\frac{|P|^2}{r^3}+\frac{|P|}{r}\right)$$ where $K$ is some absolute constant. By construction, $|P|\le |T_C|N^2/C=rN^2/C$ and $|\cL|=N^2$. So
$$\frac{N^2}{K}\le \frac{N^4}{rC^2}+\frac{N^2}{C}\Rightarrow r\le \frac{2KN^2}{C^2}$$
where we assumed that $C\ge 2K$.
\end{proof}

\bibliographystyle{alpha}
\bibliography{richlines}

\begin{thebibliography}{BDYW11}

\bibitem[BDWY12]{BDWY12}
B.~Barak, Z.~Dvir, A.~Wigderson, and A.~Yehudayoff.
\newblock {Fractional {S}ylvester-{G}allai theorems}.
\newblock {\em Proceedings of the National Academy of Sciences}, 2012.

\bibitem[BDYW11]{BDWY11}
B.~Barak, Z.~Dvir, A.~Yehudayoff, and A.~Wigderson.
\newblock Rank bounds for design matrices with applications to combinatorial
  geometry and locally correctable codes.
\newblock In {\em Proceedings of the 43rd annual ACM symposium on Theory of
  computing}, STOC '11, pages 519--528, New York, NY, USA, 2011. ACM.

\bibitem[BS14]{BS14}
Saugata Basu and Martin Sombra.
\newblock Polynomial partitioning on varieties and point-hypersurface
  incidences in four dimensions.
\newblock {\em arXiv preprint arXiv:1406.2144}, 2014.

\bibitem[DSW14]{DSW12}
Z.~Dvir, S.~Saraf, and A.~Wigderson.
\newblock Improved rank bounds for design matrices and a new proof of kelly’s
  theorem.
\newblock {\em Forum of Mathematics, Sigma}, 2, 10 2014.

\bibitem[Dvi12]{Dvir-survey}
Z.~Dvir.
\newblock {Incidence Theorems and Their Applications}.
\newblock {\em Foundations and Trends in Theoretical Computer Science},
  6(4):257--393, 2012.

\bibitem[GK10]{GK10b}
L.~Guth and N.~Katz.
\newblock Algebraic methods in discrete analogs of the {K}akeya problem.
\newblock {\em Advances in Mathematics}, 225(5):2828 -- 2839, 2010.

\bibitem[GK15]{GK10}
Larry Guth and Nets~Hawk Katz.
\newblock On the erd{\H{o}}s distinct distances problem in the plane.
\newblock {\em Annals of Mathematics}, 181(1):155--190, 2015.

\bibitem[Kol14]{Kollar14}
J.~Kollar.
\newblock Szemeredi--{T}rotter-type theorems in dimension 3.
\newblock 2014.
\newblock arXiv:1405.2243.

\bibitem[LS07]{LaSo07}
Izabella Laba and J{\'o}zsef Solymosi.
\newblock Incidence theorems for pseudoflats.
\newblock {\em Discrete \& Computational Geometry}, 37(2):163--174, 2007.

\bibitem[Rud14]{Rudnev14}
M.~Rudnev.
\newblock On the number of incidences between planes and points in three
  dimensions.
\newblock 2014.
\newblock arXiv:1407.0426v2.

\bibitem[Sch80]{Schwartz80}
J.~T. Schwartz.
\newblock Fast probabilistic algorithms for verification of polynomial
  identities.
\newblock {\em J. ACM}, 27(4):701--717, 1980.

\bibitem[SS14]{SS14}
Micha Sharir and Noam Solomon.
\newblock Incidences between points and lines in $\mathbb{R}^4$.
\newblock 2014.
\newblock http://arxiv.org/abs/1411.0777.

\bibitem[SSZ13]{SSZ13}
Micha Sharir, Adam Sheffer, and Joshua Zahl.
\newblock Improved bounds for incidences between points and circles.
\newblock In {\em Proceedings of the Twenty-ninth Annual Symposium on
  Computational Geometry}, SoCG '13, pages 97--106, New York, NY, USA, 2013.
  ACM.

\bibitem[ST83]{ST83}
E.~Szemer{\'e}di and W.~T. Trotter.
\newblock Extremal problems in discrete geometry.
\newblock {\em Combinatorica}, 3(3):381--392, 1983.

\bibitem[ST12]{ST12}
József Solymosi and Terence Tao.
\newblock An incidence theorem in higher dimensions.
\newblock {\em Discrete and Computational Geometry}, 48(2):255--280, 2012.

\bibitem[SV04]{SoVu04}
J{\'o}zsef Solymosi and VH~Vu.
\newblock Distinct distances in high dimensional homogeneous sets.
\newblock {\em Contemporary Mathematics}, 342:259--268, 2004.

\bibitem[Tot03]{Toth}
C.~Toth.
\newblock The {S}zemeredi-{T}rotter theorem in the complex plane.
\newblock {\em arXiv:math/0305283v4}, 2003.

\bibitem[Zah12]{Zahl}
Joshua Zahl.
\newblock A szemeredi-trotter type theorem in $\mathbb{R}^4$.
\newblock {\em CoRR}, abs/1203.4600, 2012.

\bibitem[Zip79]{Zippel79}
R.~Zippel.
\newblock Probabilistic algorithms for sparse polynomials.
\newblock In {\em Proceedings of the International Symposiumon on Symbolic and
  Algebraic Computation}, pages 216--226. Springer-Verlag, 1979.

\end{thebibliography}

\begin{appendix}

\section{Towards an optimal incidence theorem for points and lines in $\C^d$}
\label{sec-stronger-thm1}

For $\alpha \ge 1$ and some $1<\ell<d$, by pasting together $r^{d-\ell}/\alpha$ $\ell$-dimensional grids of size $\alpha n/r^{d-\ell}$ each, we get $\gtrsim_d \alpha n^2/r^{d+1}$ $r$-rich lines. This motivates a stronger version of Conjecture~\ref{conj-rich}.

\begin{conjecture}\label{conj-stronger}
Suppose $V \subset \C^d$ is a set of $n$ points and let $\alpha\ge 1$. For $r\ge 2$, if $$|\cL_r(V)| \gg_d \alpha\frac{n^2}{r^{d+1}}+\frac{n}{r},$$ then there is an integer $\ell$ such that $1<\ell<d$ and a subset $V' \subset V$ of size $ \gtrsim_d \alpha n/r^{d-\ell}$ which is contained in an $\ell$-flat.
\end{conjecture}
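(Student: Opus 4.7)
The plan is to proceed by induction on the dimension $d$, with base case $d=3$ furnished by Theorem~\ref{thm-conj-d=3} (Guth-Katz); assume the stronger conjecture has been established in all dimensions strictly between $2$ and $d$, and fix $V \subset \C^d$ of size $n$ with $|\cL_r(V)| \geq C_d \alpha n^2/r^{d+1}$. The strategy is to produce a hyperplane $H \subset \C^d$ that captures both many points of $V$ and a positive fraction of the rich lines, and then to push $V \cap H$ through the inductive hypothesis in the smaller ambient dimension $d-1$.

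Concretely, the first step is to find $H$ with $|V \cap H| \gtrsim_d \alpha n /r$ and with at least $\gtrsim_d \alpha n^2/r^{d+1}$ lines of $\cL_r(V)$ lying inside $H$. I would try to do this by dyadically decomposing $\cL_r(V)$ by richness, refining via Lemma~\ref{lem-refine} so that each remaining point sees roughly the same number of rich lines, and then running a strengthened version of the polynomial-method argument behind Lemma~\ref{lem-findpoly} to produce a polynomial of degree $\approx r/d$ vanishing on $V$. A gradient/flat-point analysis on the resulting hypersurface, as in the last step of the proof of Theorem~\ref{thm-manyrichlines} (and as recorded in Remark~\ref{rem-hyperplane_lines}), would yield such an $H$.

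Once $H$ is in hand, we apply the inductive hypothesis to $V \cap H$ regarded as a point set in $H \cong \C^{d-1}$. Either it already lies in some $\ell$-flat with $1 < \ell < d-1$ of the required size (and we are done, this same $\ell$-flat working inside $\C^d$), or the number of $r$-rich lines inside $H$ falls below the inductive threshold $\alpha' (n')^2/r^{d}$ with $n' = |V \cap H|$, which combined with the lower bound on rich lines captured inside $H$ gives a size bound for $V \cap H$ as a $(d-1)$-flat, i.e.\ the $\ell = d-1$ case of the conjecture. Tracking $\alpha$ and the constants $C_d$ through the recursion requires that the extra $\alpha$-factor gained in passing to $H$ compensates for a loss of at most a constant factor per layer of induction.

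The principal obstacle is the very first step, namely improving the exponent from $r^d$ in Theorem~\ref{thm-manyrichlines} to the tight $r^{d+1}$ while simultaneously upgrading the captured set from size $\alpha n/r^{d-2}$ to size $\alpha n/r$. The design-matrix route of the paper inherently loses a factor of $r$ because the row-sparsity parameter $q$ in Theorem~\ref{thm-rankbound} equals $r$; splitting each rich line into shorter chunks and invoking a lower-degree Veronese embedding only saves polynomial factors in $d$ and does not close this gap. A more promising path is a Guth-Katz style polynomial partitioning in $\C^d$ — cutting space by a polynomial of degree around $(n/r)^{1/d}$, bounding line-point incidences cell-by-cell, and handling separately the lines trapped on the partitioning hypersurface (recursing on it via the inductive hypothesis in dimension $d-1$) — but extending such partitioning beyond $\R^3$ (Theorem~\ref{thm-conj-d=3}) and $\R^4$ (Theorem~\ref{thm-conj-d=4}, \cite{SS14}) to $\C^d$, where the usual topological inputs are unavailable, is the genuinely hard step and appears to be where a new idea is required.
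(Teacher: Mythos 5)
This statement is a conjecture, not a theorem: the paper offers no proof of it (it proves only the weaker Theorem~\ref{thm-manyrichlines}, with $r^{d}$ in place of $r^{d+1}$ and only a $(d-1)$-flat detected, and records the known cases $\R^3$ and, in weakened form, $\R^4$ from \cite{GK10,SS14}). Your proposal does not close that gap, and in fact its structure concentrates all of the difficulty into a step that is itself the conjecture. Your ``first step'' asks for a hyperplane $H$ with $|V\cap H|\gtrsim_d \alpha n/r$ (plus many rich lines inside $H$); but a subset of size $\gtrsim_d \alpha n/r = \alpha n/r^{d-(d-1)}$ contained in a $(d-1)$-flat is exactly the conclusion of the conjecture with $\ell=d-1$, which is an admissible value of $\ell$. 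So if step one were available, you would already be done, and the induction on $d$ that follows is redundant; conversely, nothing in the paper's toolkit delivers step one. Lemma~\ref{lem-findpoly} requires the much stronger hypothesis $k\gtrsim_d n/r^{d-2}$ (equivalently $|\cL_r(V)|\gtrsim_d n^2/r^{d}$, a factor $r$ above the conjectured threshold), and even then the flat-point argument of Theorem~\ref{thm-manyrichlines} and Remark~\ref{rem-hyperplane_lines} only captures $\gtrsim \alpha n/r^{d-2}$ points and $\gtrsim \alpha n/r^{d}$ lines in $H$ --- far short of the $\alpha n/r$ points and $\alpha n^2/r^{d+1}$ lines you need. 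The loss is structural: in Theorem~\ref{thm-rankbound} the row sparsity is $q=r$, so the rank deficit scales like $nr^2/k$ (or $mr^2/k^2$), and as you yourself note, no reshuffling of the Veronese degree or chunking of the lines removes this extra factor of $r$. Your candidate replacement --- polynomial partitioning à la Guth--Katz in $\C^d$ --- is precisely the missing new idea, and you correctly flag that it is not available over $\C$ or in dimensions beyond those treated in \cite{GK10,SS14}; acknowledging a required new idea is not a proof of the step.

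Two further bookkeeping problems would remain even if step one were granted and you insisted on running the induction. First, the base case: Theorem~\ref{thm-conj-d=3} is a statement about $\R^3$, whereas the conjecture is over $\C^d$, so the paper gives you no base case over $\C$ (the complex Szemer\'edi--Trotter bound, Theorem~\ref{thm-r-richlines-2dimensions}, only covers the planar case, which is excluded since the conjecture requires $\ell>1$). Second, the dichotomy in your inductive step needs the richness parameter and the $\alpha$-tracking to be checked quantitatively: applying the conjecture in $H\cong\C^{d-1}$ with $n'=|V\cap H|$ requires comparing the captured line count against $\beta (n')^2/r^{d}+n'/r$ and verifying that the resulting $\ell$-flat inside $H$ has $\gtrsim_d \alpha n/r^{d-\ell}$ points of the original $V$, which holds only under additional constraints (e.g.\ on the relative size of $\alpha$ and $r$) that you do not verify. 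In short: the proposal is an honest research plan, but the statement remains open, and the argument as written assumes its hardest case.
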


The above conjecture is equivalent to the following conjecture.
\begin{conjecture}[Equiv. to Conjecture~\ref{conj-stronger}]
\label{conj-equivalent-formulation}
Given a set $V$ of $n$ points in $\C^d$, let $s_\ell$ denote the maximum number of points of $V$ contained in an $\ell$-flat. Then for $r\ge 2$,
$$|\L_r(V)|\lesssim_d \frac{n^2}{r^{d+1}}+n\sum_{\ell=2}^{d-1}\frac{s_\ell}{r^{\ell+1}}+\frac{n}{r}.$$
\end{conjecture}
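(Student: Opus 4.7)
The plan is to prove the equivalence in both directions; each implication is an elementary algebraic rearrangement together with a pigeonhole over the index $\ell$.

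For the direction Conjecture~\ref{conj-equivalent-formulation} $\Rightarrow$ Conjecture~\ref{conj-stronger}, I would assume the stated upper bound on $|\cL_r(V)|$ together with the hypothesis $|\cL_r(V)| \geq C_d(\alpha n^2/r^{d+1}+n/r)$ for some $\alpha \geq 1$ and $C_d$ much larger than the implicit constant from Conjecture~\ref{conj-equivalent-formulation}. Comparing the two estimates, and using $\alpha \geq 1$, the baseline terms $n^2/r^{d+1}$ and $n/r$ on the right can be absorbed into the left, leaving $\alpha n^2/r^{d+1} \lesssim_d \sum_{\ell=2}^{d-1}ns_\ell/r^{\ell+1}$. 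Since the sum has only $d-2 = O_d(1)$ summands, pigeonhole yields an $\ell \in \{2,\ldots,d-1\}$ with $s_\ell \gtrsim_d \alpha n/r^{d-\ell}$, and taking $V'$ to be the intersection of $V$ with an $\ell$-flat realizing $s_\ell$ gives exactly the conclusion of Conjecture~\ref{conj-stronger}.

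For the direction Conjecture~\ref{conj-stronger} $\Rightarrow$ Conjecture~\ref{conj-equivalent-formulation}, let $X := |\cL_r(V)|$. If $X \leq C_d(n^2/r^{d+1}+n/r)$, where $C_d$ is the implicit constant from Conjecture~\ref{conj-stronger}, the required bound is immediate from the first and last terms on the right of Conjecture~\ref{conj-equivalent-formulation}. Otherwise, I would set $\alpha := (X-n/r) r^{d+1}/(C_d n^2)$, which is $\geq 1$ and calibrated so that the hypothesis $X \gg_d \alpha n^2/r^{d+1}+n/r$ of Conjecture~\ref{conj-stronger} is met. The resulting conclusion produces some $\ell \in \{2,\ldots,d-1\}$ with $s_\ell \gtrsim_d \alpha n/r^{d-\ell}$, equivalently $\alpha \lesssim_d s_\ell r^{d-\ell}/n$; unwinding the definition of $\alpha$ then gives $X \lesssim_d n s_\ell/r^{\ell+1} + n/r$, which is dominated by $n^2/r^{d+1} + \sum_{\ell'=2}^{d-1}n s_{\ell'}/r^{\ell'+1}+n/r$, as required.

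Neither direction involves a serious obstacle — the whole argument is essentially bookkeeping. The only subtlety is to choose the constants so that the $\alpha \geq 1$ clause of Conjecture~\ref{conj-stronger} matches up cleanly with the threshold inequalities on both sides, and to ensure the baseline terms $n^2/r^{d+1}$ and $n/r$ are absorbed consistently without double-counting. The pigeonhole loses only a factor depending on $d$, which is fine since both conjectures are stated up to $\lesssim_d$ constants.
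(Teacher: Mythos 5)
Your argument is correct and follows essentially the same route as the paper: the paper treats the direction from the bound to Conjecture~\ref{conj-stronger} as trivial (your pigeonhole over the $d-2$ summands is exactly what is implicit there), and for the converse it uses the same parametrization $|\L_r(V)| = C_d(\alpha n^2/r^{d+1}+n/r)$ with $\alpha\ge 1$ and then invokes Conjecture~\ref{conj-stronger} to bound $|\L_r(V)|\lesssim_d ns_\ell/r^{\ell+1}+n/r$, matching your unwinding of $\alpha$. The only caveat is the small constant-bookkeeping you already flag: with $\alpha:=(X-n/r)r^{d+1}/(C_d n^2)$ and $C_d$ equal to the threshold constant of Conjecture~\ref{conj-stronger}, the hypothesis narrowly fails because of the $n/r$ term, but taking $C_d$ to be a sufficiently large multiple of that threshold fixes this immediately.
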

\begin{proof}[Proof of equivalence to Conjecture~\ref{conj-stronger}]
\ref{conj-equivalent-formulation} $\Rightarrow$ \ref{conj-stronger} is trivial. To show the other direction, if
$$|\cL_r(V)| \lesssim_d  \frac{n^2}{r^{d+1}}+\frac{n}{r},$$ we are done. Else, let 
$$|\cL_r(V)| = C_d\left( \alpha\frac{n^2}{r^{d+1}}+\frac{n}{r}\right)$$ for some $\alpha\ge 1$ and some $C_d\gg_d 1$. By \ref{conj-stronger}, for some $1<\ell<d$, we have $s_\ell \gtrsim_d \alpha n/r^{d-\ell}$. So we have  
$$|\cL_r(V)| \lesssim_d \left( \frac{ns_\ell}{r^{\ell+1}}+\frac{n}{r}\right).$$ This implies \ref{conj-equivalent-formulation}.
\end{proof}
Note that Theorem~\ref{thm-manyrichlines-equiv} and thus Theorem~\ref{thm-manyrichlines}, trivially follow from Conjecture~\ref{conj-equivalent-formulation} by observing that $s_{d-1}\ge s_{d-2}\ge \cdots \ge s_1 \ge r$. Conjecture~\ref{conj-equivalent-formulation}, if true, can be used to give an optimal bound on incidences between points and lines in $\C^d$ in terms of $s_\ell$'s by standard arguments.

For $d=2$, Conjecture~\ref{conj-equivalent-formulation} is exactly Theorem~\ref{thm-r-richlines-2dimensions}. Using the incidence bounds of \cite{GK10} and \cite{SS14} we can prove Conjecture~\ref{conj-equivalent-formulation} for $\R^3$ and `almost' prove it for $\R^4$ (these are stated as Theorem~\ref{thm-conj-d=3} and Theorem~\ref{thm-conj-d=4} respectively). As already discussed, it is possible that one needs to weaken  Conjecture~\ref{conj-equivalent-formulation} so that $s_\ell$ is the maximum number of points in an $\ell$-dimensional hypersurface of constant degree (possibly depending on $\ell$). 

For completeness, we include in this section a short derivation of Theorems~\ref{thm-conj-d=3} and \ref{thm-conj-d=4} from the incidence bounds of \cite{GK10} and \cite{SS14} which we now state.

\begin{thm}[\cite{GK10}]
\label{thm-incidencesinR3}
Let $V$ be a set of $n$ points and $\L$ be a set of $m$ lines in $\R^3$. Let $q_2$ be the maximum number of lines in a hyperplane (2-flat). Then,
$$|I(V,\L)|\lesssim n^{1/2}m^{3/4}+n^{2/3}m^{1/3}q_2^{1/3}+n+m.$$
\end{thm}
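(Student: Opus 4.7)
The plan is to prove this by the polynomial partitioning method introduced by Guth and Katz. First, apply the polynomial ham-sandwich theorem to produce a polynomial $P \in \R[x_1,x_2,x_3]$ of degree $D$ (to be optimized) whose zero set $Z(P)$ partitions $\R^3 \setminus Z(P)$ into $O(D^3)$ open cells, each containing at most $O(n/D^3)$ of the points of $V$. Any line $L \in \L$ is then either contained in $Z(P)$ or meets $Z(P)$ in at most $D$ points, and hence visits at most $D+1$ cells. Split the incidence count into two parts: incidences inside the open cells, and incidences on $Z(P)$.

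For cell incidences, inside a single cell $\Omega$ with $n_\Omega$ points and $m_\Omega$ visiting lines, a trivial bound (two distinct lines share at most one point) gives $I_\Omega \lesssim n_\Omega m_\Omega^{1/2} + n_\Omega + m_\Omega$. Summing with $\sum_\Omega n_\Omega \le n$, $\sum_\Omega m_\Omega \lesssim m D$, and Cauchy–Schwarz yields, after choosing $D \asymp n^{1/2}/m^{1/4}$ (in the regime where this is sensible), a total contribution of $O(n^{1/2} m^{3/4} + n + m)$. This supplies the first, third, and fourth terms of the claimed bound.

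The hard part, and where the $q_2$ term arises, is bounding incidences on $Z(P)$ itself. Decompose $Z(P)$ into irreducible components and stratify its points into singular points (on the common zero set of $P$ and $\nabla P$, of dimension $\le 1$), flat points (where the second fundamental form degenerates, cut out by a Hessian-type auxiliary polynomial of degree $O(D)$), and regular non-flat points. A key structural input from the Guth–Katz toolbox is that through each regular non-flat point of $Z(P)$ there pass at most two lines contained in $Z(P)$, giving an $O(n+m)$ contribution from this stratum; and the singular locus, being at most one-dimensional, absorbs only $O(n+m)$ more via Bézout-type arguments combined with the degree bound $D$. The remaining incidences lie on components that are flat everywhere, which must be planes. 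On each such plane $\pi \subset Z(P)$, the number of lines of $\L$ contained in $\pi$ is at most $q_2$, and the planar Szemerédi–Trotter theorem bounds incidences there by $O(n_\pi^{2/3} q_2^{2/3} + n_\pi + q_2)$; summing over the at most $O(D)$ planar components and balancing with the same $D$ yields the mixed term $n^{2/3} m^{1/3} q_2^{1/3}$.

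The main obstacle is the analysis on $Z(P)$: one needs the flecnode/ruled-surface machinery to show that a non-planar irreducible component of $Z(P)$ contains only $O(D^2)$ lines and that at a generic regular point only two lines of the configuration pass through, so that the genuinely three-dimensional part of $Z(P)$ does not dominate. Once those algebraic-geometric inputs are in hand, combining the cell contribution, the singular/non-flat contribution, and the planar contribution, and optimizing $D$, produces the four terms in the stated inequality. The role of $q_2$ is precisely to prevent the planar components of $Z(P)$ from concentrating too many lines and thereby exceeding the planar Szemerédi–Trotter budget.
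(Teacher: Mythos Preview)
The paper does not give its own proof of this statement: Theorem~\ref{thm-incidencesinR3} is quoted from \cite{GK10} as a black box and then used to derive Theorem~\ref{thm-conj-d=3}. So there is no ``paper's proof'' to compare against; your task reduces to whether the sketch you wrote is a faithful outline of the Guth--Katz argument.

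At the level of strategy you are on the right track: polynomial partitioning with a degree-$D$ polynomial, a cellular count giving the $n^{1/2}m^{3/4}$ term after choosing $D\asymp n^{1/2}/m^{1/4}$, and a separate analysis on the zero set $Z(P)$. Two places in your $Z(P)$ analysis are inaccurate, however, and would not close as written. First, the structural claim that ``through each regular non-flat point of $Z(P)$ there pass at most two lines contained in $Z(P)$'' is not what drives the argument. The correct input is the Cayley--Salmon/flecnode machinery: one introduces the flecnode polynomial $\mathrm{Fl}(P)$ of degree $O(D)$, shows that any line contained in $Z(P)$ lies in $Z(\mathrm{Fl}(P))$, and concludes that a non-ruled irreducible component of degree $e$ contains at most $O(e^2)$ lines; the ``at most two lines through a point'' bound is specific to doubly ruled quadrics, not to generic non-flat points. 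Second, your derivation of the $n^{2/3}m^{1/3}q_2^{1/3}$ term by summing planar Szemer\'edi--Trotter over the $O(D)$ planar factors does not balance with the single choice $D\asymp n^{1/2}/m^{1/4}$: with at most $D$ planes, $\sum_\pi n_\pi^{2/3}q_2^{2/3}\le D^{1/3}n^{2/3}q_2^{2/3}$, which for that $D$ gives $n^{5/6}m^{-1/12}q_2^{2/3}$, not $n^{2/3}m^{1/3}q_2^{1/3}$. In the actual proof the $q_2$ term arises either via a separate optimization/induction, or by first pruning planes that carry many lines, rather than from the same $D$ that handles the cellular term. These two points are where the real work in \cite{GK10} lies, and your sketch elides them.
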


\begin{thm}[\cite{SS14}] 
\label{thm-incidencesinR4}
Let $V$ be a set of $n$ points and $\L$ be a set of $m$ lines in $\R^4$. Let $q_3$ be the maximum number of lines of $\L$ contained in a quadric hypersurface or a hyperplane and $q_2$ be the maximum number of lines of $\L$ contained in a 2-flat. Then,
$$|I(V,\L)|\lesssim 2^{c\sqrt{\log n}}\left(n^{2/5}m^{4/5}+n\right)+n^{1/2}m^{1/2}q_3^{1/4}+n^{2/3}m^{1/3}q_2^{1/3}+m$$ for some absolute constant $c$.
\end{thm}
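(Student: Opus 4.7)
The statement in question, Theorem~\ref{thm-incidencesinR4}, is quoted from \cite{SS14}, so the author does not actually prove it in this paper but uses it as a black box to derive Theorem~\ref{thm-conj-d=4}. Still, let me sketch how I would expect such a bound to be proved, following the polynomial partitioning paradigm introduced by Guth-Katz and generalized by Solymosi-Tao and Solymosi-Sharir to higher dimensions.

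The plan is to apply polynomial partitioning in $\R^4$. First, by the polynomial Ham-Sandwich theorem, choose a partitioning polynomial $P$ of degree $D$ (to be optimized) whose zero set $Z = Z(P)$ cuts $\R^4 \setminus Z$ into $O(D^4)$ open cells, each containing at most $O(n/D^4)$ points of $V$. Split incidences into those occurring inside the open cells and those occurring on $Z$. For cell incidences, note that a single line intersects at most $D+1$ open cells, so the total number of (line, cell) pairs is at most $m(D+1)$; inside each cell, apply either a trivial pairwise bound or an inductive hypothesis on smaller instances, and optimize $D$ to balance. This gives the $n^{2/5} m^{4/5}$ contribution. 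The $2^{c\sqrt{\log n}}$ factor arises because each layer of the induction loses a constant and the recursion is driven down until $n$ is small, so the depth is about $\sqrt{\log n}$.

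The more delicate part is handling lines (and incidences) lying on $Z$ itself. One distinguishes between lines that are fully contained in $Z$ and lines that meet $Z$ in only finitely many points. The latter are easy: each contributes at most $D$ incidences, giving $mD$ total. The former require algebraic geometry: one stratifies $Z$ according to whether components are planes, quadrics, or higher-degree ruled surfaces, and uses the fact that any irreducible variety in $\R^4$ that is not a hyperplane or quadric can contain only a controlled number of lines (via Cayley-Salmon type results on lines in algebraic surfaces, or via flecnode-style polynomial computations). After this stratification, lines contained in a $2$-flat contribute the $n^{2/3} m^{1/3} q_2^{1/3}$ term (via Szemerédi-Trotter in the plane), and lines contained in a quadric or hyperplane contribute the $n^{1/2} m^{1/2} q_3^{1/4}$ term (via an incidence bound in $\R^3$ applied on such surfaces, since $3$-dimensional hypersurfaces of bounded degree can be birationally reduced).

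The main obstacle is the algebraic geometry in the last step: in $\R^3$, Guth-Katz only had to worry about lines in $2$-flats and in ruled surfaces, whereas in $\R^4$ the family of varieties containing many lines is richer (hyperplanes, quadrics, and complicated $3$-dimensional ruled surfaces). Establishing the correct counterpart of the flecnode polynomial and controlling the components of $Z$ that are ``rich'' in lines is what forces the extra $q_3$ term and what caused the bound to take longer to prove than its $\R^3$ counterpart. Once such structural results are in place, the whole bound assembles by a routine (but careful) balancing of $D$ against the other parameters.
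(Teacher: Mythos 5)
You are right that this theorem is not proved in the paper at all: it is imported verbatim from \cite{SS14} and used as a black box in Appendix~\ref{sec-stronger-thm1} to derive Theorem~\ref{thm-conj-d=4}, so there is no in-paper argument to compare your sketch against. Your outline of how Sharir and Solomon establish the bound --- constant-degree polynomial partitioning in $\R^4$ with induction giving the $n^{2/5}m^{4/5}$ term and the $2^{c\sqrt{\log n}}$ loss from the recursion depth, plus a separate algebraic-geometric treatment of lines lying in the zero set, stratified into $2$-flats (handled by Szemer\'edi--Trotter, giving the $q_2$ term) and hyperplanes or quadrics (giving the $q_3$ term) --- is consistent with their actual argument at the level of a sketch, and correctly identifies why the $\R^4$ case is harder than Guth--Katz in $\R^3$; nothing further is needed here, since the correct move for this paper is exactly the one you made, namely citing the result.
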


\begin{proof}[Proof of Theorem~\ref{thm-conj-d=3}]
Let $\L=\L_r(V)$ be the set of $r$-rich lines and let $|\L|=m$. Let $q_2$ be the maximum number of lines of $\L$ contained in a hyperplane. By Theorem~\ref{thm-incidencesinR3}, $$rm\le |I(V,\L)|\lesssim n^{1/2}m^{3/4}+n^{2/3}m^{1/3}q_2^{1/3}+n+m.$$ From this it follows that
$$m \lesssim \frac{n^2}{r^4}+\frac{nq_2^{1/2}}{r^{3/2}}+\frac{n}{r}.$$
Now we will upper bound $q_2$. Let $\L'\subset \L$ be a set of $q_2$ lines contained in some hyperplane $H$ and let $V'=V\cap H$. We know that $|V'|\le s_2$. By applying Theorem~\ref{thm-r-richlines-2dimensions} to $\L',V'$ in $H$, we get $$q_2=|\L'|\lesssim \frac{|V'|^2}{r^3}+\frac{|V'|}{r} \le \frac{s_2^2}{r^3}+\frac{s_2}{r}\Rightarrow q_2^{1/2}\lesssim \frac{s_2}{r^{3/2}}+\frac{s_2^{1/2}}{r^{1/2}}.$$
Using this bound on $q_2$ we get, $$m\lesssim \frac{n^2}{r^4}+\frac{ns_2}{r^3}+\frac{ns_2^{1/2}}{r^2}+\frac{n}{r}\lesssim \frac{n^2}{r^4}+\frac{ns_2}{r^3}+\frac{n}{r}$$ where in the last step we used AM-GM inequality.
\end{proof}

\begin{proof}[Proof of Theorem~\ref{thm-conj-d=4}]
In this proof $c$ represents some absolute constant which can vary from step to step. Let $\L=\L_r(V)$ be the set of $r$-rich lines and let $|\L|=m$. Let $q_3$ be the maximum number of lines of $\L$ contained in a quadric hypersurface or a hyperplane and $q_2$ be the maximum number of lines of $\L$ contained in a 2-flat. By Theorem~\ref{thm-incidencesinR4}, $$rm\le |I(V,\L)|\lesssim 2^{c\sqrt{\log n}}\left(n^{2/5}m^{4/5}+n\right)+n^{1/2}m^{1/2}q_3^{1/4}+n^{2/3}m^{1/3}q_2^{1/3}+m$$. From this it follows that
\begin{equation}
\label{eq-thm-conj-d=4}
 m\lesssim 2^{c\sqrt{\log n}}\left(\frac{n^2}{r^5}+\frac{n}{r}\right)+\frac{nq_3^{1/2}}{r^2}+\frac{nq_2^{1/2}}{r^{3/2}}.
\end{equation}
By applying Theorem~\ref{thm-r-richlines-2dimensions} to the collection of $q_2$ lines contained in a 2-flat, we get $$q_2\lesssim\frac{s_2^2}{r^{3}}+\frac{s_2}{r}.$$
Now we will upper bound $q_3$. Let $\L'\subset \L$ be a set of $q_3$ lines contained in some quadric or hyperplane $Q$ and let $V'=V\cap Q$. We know that $|V'|\le s_3$. By applying Theorem~\ref{thm-incidencesinR4} again to $\L',V'$, we get $$rq_3\le |I(V',\L')| \lesssim 2^{c\sqrt{\log s_3}}\left(s_3^{2/5}q_3^{4/5}+s_3\right)+s_3^{1/2}q_3^{3/4}+s_3^{2/3}q_3^{1/3}q_2^{1/3}+q_3$$
$$\Rightarrow q_3\lesssim 	2^{c\sqrt{\log s_3}}\cdot \left(\frac{s_3^2}{r^5}+\frac{s_3}{r}\right)+\frac{s_3^2}{r^4}+\frac{s_3q_2^{1/2}}{r^{3/2}}.$$
Substituting these bounds on $q_3,q_2$ in Eq.~\ref{eq-thm-conj-d=4} and using AM-GM inequality a few times gives $$m\lesssim2^{c\sqrt{\log n}}\left(\frac{n^2}{r^5}+\frac{n}{r}\right)+\frac{ns_3}{r^4}\left(1+\frac{2^{c\sqrt{\log s_3}}}{r^{1/2}}\right)+\frac{ns_2}{r^3} \lesssim 2^{c\sqrt{\log n}}\left(\frac{n^2}{r^5}+\frac{ns_3}{r^4}+\frac{ns_2}{r^3}+\frac{n}{r}\right).$$
\end{proof}

\end{appendix}

\end{document}